\documentclass[11pt, twoside, leqno]{article}

\usepackage{amsmath}
\usepackage{amssymb}
\usepackage{titletoc}
\usepackage{mathrsfs}
\usepackage{amsthm}
\usepackage{indentfirst}
\usepackage{color}
\usepackage{txfonts}

\textwidth=15cm
\textheight=21truecm
\oddsidemargin 0.35cm
\evensidemargin 0.35cm

\allowdisplaybreaks

\pagestyle{myheadings}\markboth{\footnotesize\rm\sc
Jin Tao, Dachun Yang and Dongyong Yang}
{\footnotesize\rm\sc
Buerling-Ahlfors Commutators}

\def\bint{{\ifinner\rlap{\bf\kern.30em--}
\int\else\rlap{\bf\kern.35em--}\int\fi}\ignorespaces}

\def\sbint{{\ifinner\rlap{\bf\kern.32em--}
\hspace{0.078cm}\int\else\rlap{\bf\kern.45em--}\int\fi}\ignorespaces}

\def\rr{{\mathbb R}}
\def\rn{{{\rr}^n}}

\def\nn{{\mathbb N}}

\def\cc{{\mathbb C}}

\def\cf{{\mathcal F}}

\def\cm{{\mathcal M}}

\def\cb{{\mathcal B}}

\def\co{{\mathcal O}}

\def\fz{\infty}
\def\az{\alpha}
\def\supp{{\mathop\mathrm{\,supp\,}}}

\def\loc{{\mathop\mathrm{\,loc\,}}}

\def\dz{\delta}

\def\ez{\epsilon}

\def\wz{\widetilde}

\def\ls{\lesssim}
\def\gs{\gtrsim}

\def\bmo{{{\rm BMO}(\rn)}}

\def\cmoc{{{\rm CMO}(\cc)}}
\def\bmoc{{{\rm BMO}(\cc)}}
\def\linc{{{L^\fz}(\cc)}}

\def\r{\right}
\def\lf{\left}

\def\beeqn{\begin{equation}}
\def\eneqn{\end{equation}}
\def\beeqns{\begin{equation*}}
\def\eneqns{\end{equation*}}
\def\beeqa{\begin{eqnarray}}
\def\eneqa{\begin{eqnarray}}
\def\beeqas{\begin{eqnarray*}}
\def\eneqas{\begin{eqnarray*}}
\def\besp{\begin{split}}
\def\ensp{\begin{split}}
\def\noz{\nonumber}

\newtheorem{thm}{Theorem}[section]
\newtheorem{lem}[thm]{Lemma}%[section]
%[section]
%[section]
%[section]
%[section]

\theoremstyle{definition}

 \newtheorem{defn}[thm]{Definition}%[section]
\numberwithin{equation}{section}

\begin{document}

\title{\Large\bf Buerling-Ahlfors Commutators on Weighted Morrey Spaces
and Applications to Beltrami Equations
\footnotetext{\hspace{-0.35cm} 2010 {\it
Mathematics Subject Classification}: Primary 42B20;
Secondary 46E35. \endgraf
{\it Key words and phrases}. Beurling-Ahlfors transform, compactness, commutator,
weighted Morrey space, Beltrami equation.\endgraf
 \endgraf
Dachun Yang is supported by the National Natural Science Foundation of China
(Grant Nos. 11571039, 11726621 and 11761131002).
Dongyong Yang is supported by the National Natural Science Foundation of China
(Grant No. 11571289) and Fundamental Research Funds for Central Universities of China
(Grant No. 20720170005).}}
\date{ }
\author{Jin Tao, Dachun Yang and Dongyong Yang\footnote{Corresponding author/April 7, 2018/newest version.}}
\maketitle

\vspace{-0.8cm}

\begin{center}
\begin{minipage}{13cm}
{\noindent  {\bf Abstract:}\ Let $p\in(1, \infty)$, $\kappa\in(0, 1)$ and
$w\in A_p(\cc).$ In this article, the authors obtain a boundedness
(resp., compactness) characterization of the Buerling-Ahlfors commutator
$[\mathcal B, b]$ on the weighted Morrey space $L_w^{p,\,\kappa}(\mathbb C)$
via $\mathop\mathrm{BMO}(\cc)$ [resp., $\mathop\mathrm{CMO}(\cc)$],
where $\mathcal B$ denotes the Buerling-Ahlfors transform and
$b\in \mathop\mathrm{BMO}(\cc)$ [resp., $\mathop\mathrm{CMO}(\cc)$].
Moreover, an application to the Beltrami equation is also given.}
\end{minipage}
\end{center}

\vspace{0.0cm}

\section{Introduction and statement of main results\label{s1}}

Let $T$ be a {Calder\'on-Zygmund operator} on $\rn$ and $b\in L^1_{\rm loc}(\rn)$.
The commutator $[b, T]$ is defined by setting
$$  [b,T]f(x):=b(x)T(f)(x)-T(bf)(x)$$
for suitable function $f$ with compact support and $x\notin \supp (f)$.
It is well known that the boundedness (resp., compactness) of
Calder\'on-Zygmund commutators on function spaces can be characterized by functions in
${\rm BMO}(\rn)$ [resp., ${\rm CMO}(\mathbb R^n)$] and plays an important role
in harmonic analysis, complex analysis, PDEs and other fields in mathematics.
Here and hereafter, the \emph{space} ${\rm CMO}(\mathbb R^n)$ is defined to be the
${\rm BMO}(\mathbb R^n)$-closure of $C^\fz_c(\rn)$, the set of all infinitely
differentiable functions on $\rn$ with compact supports.

In particular, to extend the classical $H^p$ spaces to higher dimension,
Coifman et al.\,\cite{CoifmanRochbergWeiss76AnnMath}
proved that, for any $b\in {\rm BMO}(\mathbb R^n)$, the commutator $[b,T]$
of a Calder\'on-Zygmund operator $T$ with smooth kernel is bounded on
$L^p(\mathbb R^n)$ for any $p\in(1, \infty)$;
they also proved that, if $[b, R_j]$ is bounded on $L^p(\mathbb R^n)$
for every Riesz transform $R_ j$, $j\in\{1,\ldots, n\}$ and some $p\in(1,\infty)$,
then $b\in {\rm BMO}(\mathbb R^n)$.
Later, Uchiyama \cite{Uchiyama78TohokuMathJ} further showed that
$b\in{\rm BMO}(\rn)$ if the commutator $[b,T]$ of a Calder\'on-Zygmund operator
$T$ with smooth kernel is bounded on $L^p(\rn)$ for some $p\in(1, \infty)$;
he also showed that $[b, T]$ is compact on $L^p(\mathbb R^n)$ for any $p\in(1, \infty)$
if and only if $b\in{\rm CMO}(\mathbb R^n)$.  These equivalent characterizations
of the boundedness and the compactness of commutators were further extended to the
Morrey space $L^{p,\,\kappa}(\rn)$ by Di Fazio and Ragusa \cite{DiFazioRagusa91BUMIA}
and Chen et al.\,\cite{CDW12CJM}, respectively, where $\kappa\in(0,1)$ and $p\in(1,\infty)$.

On the other hand,  Coifman et al.\,\cite{CoifmanLMS93JMPA} applied the bounedness of
Calder\'on-Zygmund commutators to study Navier-Stokes equations.
Let $\cb$ be the Beurling-Ahlfors transform on $\cc$ defined by
the following principal value:
$$\cb f(z):=\,\mathrm{p.\,v.\,}-\frac1\pi\int_{\mathbb C}\frac{f(u)}{(z-u)^2}\,du.$$
For brevity, we denote the area element $dx\,dy$ in
$\mathbb{R}^2$ (or, equivalently, the differential form $du\wedge d\bar{u}$ in $\cc$)
by $du$ as above. Then $\cb$ is a Calder\'on-Zygmund operator on $\cc$. More precisely,
let $K_{\cb}(z,u):=-\frac1\pi\frac1{(z-u)^2}$ be the kernel of $\cb$.
Then there exists a positive constant $C$ such that
\begin{itemize}
\item [{\rm(i)}] for any $z,\,u\in\cc$ with $z\not=u$,
  \begin{align}\label{cz kernel condition-1}
  |K_{\cb}(z,u)|\le C \frac1{|z-u|^2};
  \end{align}
\item[{\rm(ii)}] for any $z,\,u,\,u_0\in \cc$ with $|u_0-u|\le|u_0-z|/2$,
  \begin{align}\label{cz kernel condition-2}
   &|K_{\cb}(u_0, z)-K_{\cb}(u, z)|+|K_{\cb}(z, u_0)-K_{\cb}(z, u)|
   \le C \frac{|u_0-u|}{|u_0-z|^{3}}.
  \end{align}
\end{itemize}
At the frequency side, $\cb$ corresponds to the Fourier multiplier $m(\xi):=\frac{\overline{\xi}}{\xi}$ and $\cb$ is an isometry in $L^2(\mathbb C)$.
For more work on Beurling-Ahlfors transforms, we refer the reader to \cite{AstalaIwaniecSaksman01DMJ,AstalaIwaniecMartin09,MateuOrobitgVerdera09JMPA, GutlyanskiiRyazanovSrebroYakubov12,BojarskiGutlyanskiiMartioRyazanov13,ClopCruz13AASFM}.

In \cite{Iwaniec92}, Iwaniec used the $L^p(\rn)$-compactness theorem of Calder\'on-Zygmund
commutators of Uchiyama \cite{Uchiyama78TohokuMathJ} to derive the
$L^p(\mathbb C)$-invertibility of the operator $Id-b\cb$, where $p\in(1,\fz)$,
$Id$ is the identity operator and the coefficient
$b\in L^\fz(\mathbb C)\cap {\rm CMO}(\mathbb C)$
has compact support, and applied it to study linear complex Beltrami equations and
the $L^p(\cc)$-theory of quasiregular mappings.
These results were further extended to the weighted Lebesgue space $L^p_w(\mathbb C)$
with $p\in(1,\fz)$ and $w\in A_p(\mathbb C)$ by
Clop and Cruz \cite{ClopCruz13AASFM}, where they also
obtained a priori  estimate in $L^p_w(\mathbb C)$ for the generalized Beltrami
equation and the regularity for the Jacobian of certain quasiconformal mappings. 
For more results on the boundedness and the compactness of  Calder\'on-Zygmund
commutators  on function spaces and their applications, please see \cite{Janson78ArkMat,CoifmanLMS93JMPA,KrantzLi01JMAAb,KomoriMizuhara06MathNachr,
MaoSunWu16ActaMathSin,LernerOmbrosiRivera17arxiv}
and references therein.

Let $p\in(1,\infty)$, $\kappa\in(0,1)$ and $w\in A_p(\cc)$. In this article,
we consider the boundedness and the compactness characterizations of
the Beurling-Ahlfors transform commutator $[b,\mathcal B]$
on the weighted Morrey spaces $L_w^{p,\,\kappa}(\cc)$.
As an application, we apply the $L_w^{p,\,\kappa}(\cc)$-compactness
of $[b,\mathcal B]$ to study the Beltrami equation.
To this end, we first recall some necessary notation and notions.
In what follows, for any $p\in[1,\fz)$, we use the \emph{symbol $L^p_{\rm loc}(\cc)$}
to denote the set of all locally integrable functions on $\cc$.

\begin{defn}\label{d-Ap}\rm
Let $p\in(1, \infty)$. A non-negative function $w\in L^1_{\rm loc}(\cc)$ is called a
\emph{Muchenhoupt $A_p(\cc)$ weight}, denoted by $w\in A_p(\cc)$, if
\begin{align*}
[w]_{A_p(\cc)}:=\sup_{Q} \langle w\rangle_Q\langle w^{1-p'}\rangle_Q^{p-1}<\infty,
\end{align*}
where the supremum is taken over all squares $Q$ in $\cc$, $w (Q):=\int_{Q}w (z)\,dz$ and
$\langle w\rangle_Q:=\frac1{|Q|}w(Q)$.
\end{defn}

Throughout this article, for any $z\in\cc$ and $r\in(0,\infty)$,
let $Q(z,r)$ be the square in $\cc$ with center $z$ and side-length $2r$.
We recall the following notion of the weighted Morrey spaces from
\cite{KomoriShirai09MathNachr}.
\begin{defn}
Let $p\in(1,\infty)$, $\kappa\in(0,1)$ and $w\in A_p(\cc)$.
The \emph{weighted Morrey space} $L_w^{p,\,\kappa}(\cc)$ is defined by setting
\begin{equation*}
L_w^{p,\,\kappa}(\cc):=\lf\{f\in L^p_{\rm loc}(\cc):\,\,\|f\|_{L_w^{p,\,\kappa}(\cc)}<\infty\r\}
\end{equation*}
with
\begin{equation*}
\|f\|_{L_w^{p,\,\kappa}(\cc)}:=\sup_{r\in(0,\,\fz),\,z\in\mathbb C}
\lf\{\frac{1}{[w(Q(z,\,r))]^\kappa}\int_{Q(z,\,r)}|f(u)|^pw(u)\,du\r\}^{1/p}.
\end{equation*}
\end{defn}

In \cite{KomoriShirai09MathNachr}, Komori and Shirai obtained some results concerning
commutators on the weighted Morrey spaces $L^{p,\,\kappa}_w({\rn})$, where
$\kappa\in(0,1)$, $p\in(1, \infty)$ and $w$ is a Muckenhoupt $A_p$ weight on $\rn$;
they showed that  any Calder\'on-Zygmund operator $T$ and its commutator $[b,T]$
with $b\in\bmo$ are both bounded on  $L^{p,\,\kappa}_w({\rn})$.
Since the Beurling-Ahlfors transform $\cb$ is a Calder\'on-Zygmund operator,
we know that, for any $p\in(1,\infty)$, $\kappa\in(0,1)$ and $w\in A_p(\cc)$,
$\cb$ and the commutator $[b,\,\cb]$ with $b\in {\rm BMO}(\mathbb C)$ are both
bounded on $L_w^{p,\,\kappa}(\mathbb C)$. Then we have the following result.

\begin{thm}\label{t-Beurling com bounded}
Let $p\in(1,\infty)$, $\kappa\in(0,1)$, $w\in A_p(\cc)$ and $b\in L^1_{\loc}(\cc)$.
Then the Beurling-Ahlfors transform commutator $[b,\,\cb]$ has the following
boundedness characterization:
\begin{enumerate}
    \item[{\rm(i)}]If $b\in {\rm BMO}(\mathbb C)$, then $[b,\,\cb]$ is bounded on
    $L_w^{p,\,\kappa}(\mathbb C)$.
    \item[{\rm(ii)}]If $b$ is real-valued and $[b,\,\cb]$ is bounded on
    $L_w^{p,\,\kappa}(\mathbb C)$, then $b\in {\rm BMO}(\mathbb C)$.
\end{enumerate}
\end{thm}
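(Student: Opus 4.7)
Part (i) is essentially already proved in the text preceding the statement: the kernel $K_{\cb}$ satisfies the standard Calder\'on--Zygmund kernel conditions \eqref{cz kernel condition-1} and \eqref{cz kernel condition-2}, and $\cb$ is $L^2(\cc)$-bounded (indeed an isometry, via its unit-modulus Fourier multiplier). Hence $\cb$ is a Calder\'on--Zygmund operator on $\cc$, and the Komori--Shirai theorem invoked in the paragraph just before Theorem \ref{t-Beurling com bounded} yields the boundedness of $[b,\cb]$ on $L^{p,\,\kappa}_w(\cc)$ for any $b\in{\rm BMO}(\cc)$.

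For part (ii), the plan is to adapt the Janson--Uchiyama lower-bound paradigm to the weighted Morrey setting. The key geometric input is a sign/size control of the real part of the Beurling--Ahlfors kernel on pairs of well-separated squares of equal size. Fix any square $Q_0=Q(z_0,r)$ and set $\tilde Q_0:=Q(z_0+5r,r)$. A direct computation shows that, for $z\in Q_0$ and $u\in \tilde Q_0$, $z-u$ has real part in $[-7r,-3r]$ and imaginary part in $[-2r,2r]$, so that
$$\operatorname{Re}\left\{-\frac{1}{\pi(z-u)^2}\right\}\ge\frac{c_0}{r^2}$$
for an absolute constant $c_0>0$, while this same quantity is also bounded above by $C_0/r^2$.

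Given this, let $\alpha$ be a median of $b$ on $\tilde Q_0$, split $\tilde Q_0=E_1\cup E_2$ with $E_1:=\{u\in\tilde Q_0:\,b(u)\le\alpha\}$ and $E_2:=\{u\in\tilde Q_0:\,b(u)>\alpha\}$ (so $|E_i|\ge|\tilde Q_0|/2$), and set $f_i:=\chi_{E_i}$. Because $b$ is real-valued, the sign coherence between $b(z)-b(u)$ on the sets $\{z\in Q_0:\,b(z)\ge\alpha\}$ and $\{z\in Q_0:\,b(z)<\alpha\}$ and the positive real part of the kernel, combined with the two-sided bounds above, yields the pointwise estimate
$$\bigl|[b,\cb]f_1(z)\bigr|+\bigl|[b,\cb]f_2(z)\bigr|\ge c\,|b(z)-\alpha|$$
for every $z\in Q_0$, with $c>0$ absolute. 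Integrating the $p$th power against $w\,dz$ on $Q_0$, applying the assumed bound $[b,\cb]\colon L^{p,\,\kappa}_w(\cc)\to L^{p,\,\kappa}_w(\cc)$, using $\|\chi_{E_i}\|_{L^{p,\,\kappa}_w(\cc)}\le w(\tilde Q_0)^{(1-\kappa)/p}$, and invoking the $A_p(\cc)$-doubling relation $w(Q_0)\sim w(\tilde Q_0)$, one obtains
$$\int_{Q_0}|b(z)-\alpha|^p\,w(z)\,dz\le C\,\bigl\|[b,\cb]\bigr\|^p_{L^{p,\,\kappa}_w\to L^{p,\,\kappa}_w}\,w(Q_0).$$
A H\"older inequality with the $A_p$ weight, combined with the defining inequality for $A_p(\cc)$, then converts this into
$$\frac{1}{|Q_0|}\int_{Q_0}|b(z)-\alpha|\,dz\le C\,\bigl\|[b,\cb]\bigr\|_{L^{p,\,\kappa}_w\to L^{p,\,\kappa}_w},$$
uniformly in $Q_0$; replacing the median $\alpha$ by $\langle b\rangle_{Q_0}$ costs only an absolute factor, so $b\in{\rm BMO}(\cc)$ follows.

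The main obstacle I anticipate is the bookkeeping that matches the weighted Morrey norm of the test functions (which produces the factor $w(\tilde Q_0)^{(1-\kappa)/p}$) against a target bound that must ultimately become an \emph{unweighted} mean oscillation on $Q_0$. This requires two successive reductions: first, the $A_p(\cc)$-doubling is used to combine $w(Q_0)^\kappa$ (from the definition of the Morrey norm on the target side) with $w(\tilde Q_0)^{1-\kappa}$ (from the test function) into a single $w(Q_0)$ factor; second, the $A_p$ H\"older inequality is used to trade the weighted $L^p$-norm of $b-\alpha$ on $Q_0$ for its unweighted $L^1$-mean. Ensuring that all constants depend only on $[w]_{A_p(\cc)}$, $p$ and $\kappa$, and that the exponents match exactly, is the crux of the argument.
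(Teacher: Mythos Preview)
Your proposal is correct and follows essentially the same Janson--Uchiyama scheme as the paper: pick a shifted companion square, use a median of $b$ on it, exploit sign coherence of (a component of) the Beurling--Ahlfors kernel on well-separated squares to obtain a pointwise lower bound for $|[b,\cb]\chi_{F}|$, and finish with H\"older plus $A_p$-doubling. The paper's implementation differs only in cosmetic choices: it shifts \emph{diagonally} by $4r_0+i4r_0$ and uses the \emph{imaginary} part $\tfrac{2(x-\zeta)(y-\eta)}{\pi|z-u|^4}$ of $K_{\cb}$ (hence its Lemma~\ref{product set of b}(iii) tracks the sign of $(x-\zeta)(y-\eta)$), whereas you shift horizontally and use the real part; also, the paper works directly with the $L^1$ mean of $|b-\alpha|$ and applies H\"older/$A_p$ in one step at the end, while you pass through the weighted $L^p$ bound first.

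One small correction: with your horizontal shift one has $(x-\zeta)^2-(y-\eta)^2\ge 5r^2>0$, so $\operatorname{Re}\bigl\{-\tfrac{1}{\pi(z-u)^2}\bigr\}=-\tfrac{(x-\zeta)^2-(y-\eta)^2}{\pi|z-u|^4}$ is \emph{negative}, not $\ge c_0/r^2$. This does not affect the argument---what you actually need (and use) is that $\operatorname{Re}K_{\cb}(z,u)$ has a fixed sign on $Q_0\times\tilde Q_0$ with $|\operatorname{Re}K_{\cb}(z,u)|\sim r^{-2}$---but the displayed inequality should be corrected accordingly.
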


Based on Theorem \ref{t-Beurling com bounded}, we further investigate the compactness
of the Buerling-Ahlfors transform commutator.
\begin{thm}\label{t-Beurling com compact}
Let $p\in(1,\infty)$, $\kappa\in(0,1)$, $w\in A_p(\cc)$ and $b\in {\rm BMO}(\mathbb C)$.
Then the Beurling-Ahlfors transform commutator $[b,\,\cb]$ has the following
compactness characterization:
\begin{enumerate}
    \item[{\rm(i)}]If $b\in {\rm CMO}(\mathbb C)$, then $[b,\,\cb]$ is compact on
    $L_w^{p,\,\kappa}(\mathbb C)$.
    \item[{\rm(ii)}]If $b$ is real-valued and $[b,\,\cb]$ is compact on
    $L_w^{p,\,\kappa}(\mathbb C)$, then $b\in {\rm CMO}(\mathbb C)$.
\end{enumerate}
\end{thm}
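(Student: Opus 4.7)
The plan is to prove the two implications separately, using approximation for (i) and a test-function argument for (ii), in both cases adapting known strategies for $L^p$ and unweighted Morrey spaces to the weighted Morrey setting.

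For (i), since $\cmoc$ is by definition the $\bmoc$-closure of $C^\infty_c(\cc)$, I first choose $\{b_j\}_{j=1}^\infty\subset C^\infty_c(\cc)$ with $\|b_j-b\|_{\bmoc}\to 0$. Theorem \ref{t-Beurling com bounded}(i) gives
\begin{equation*}
\|[b,\cb]-[b_j,\cb]\|_{L_w^{p,\,\kappa}(\cc)\to L_w^{p,\,\kappa}(\cc)}\ls \|b-b_j\|_{\bmoc}\to 0,
\end{equation*}
and since the compact operators form a closed subspace of $\mathcal L(L_w^{p,\,\kappa}(\cc))$, it suffices to prove $[b_j,\cb]$ is compact for each fixed $b_j\in C^\infty_c(\cc)$. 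For this, I would first establish a Fr\'echet-Kolmogorov type characterization of precompactness in $L_w^{p,\,\kappa}(\cc)$: a bounded family is precompact iff it is uniformly tight at infinity and uniformly equicontinuous under translations, together with an additional condition capturing the Morrey supremum structure at small scales. I would then verify these conditions for $\{[b_j,\cb]f:\,\|f\|_{L_w^{p,\,\kappa}(\cc)}\le 1\}$, using the smoothness and compact support of $b_j$, the kernel estimates \eqref{cz kernel condition-1} and \eqref{cz kernel condition-2}, and the boundedness of maximal-type operators on $L_w^{p,\,\kappa}(\cc)$.

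For (ii), Theorem \ref{t-Beurling com bounded}(ii) first gives $b\in\bmoc$. Next I would invoke a Uchiyama-type three-condition characterization of $\cmoc$: $b\in\cmoc$ if and only if the mean oscillation $\frac1{|Q|}\int_Q|b-b_Q|\,du$ tends to zero uniformly over squares $Q$ of side length $r$ as $r\to 0^+$, uniformly over squares of side length $R$ as $R\to\infty$, and uniformly over squares contained in $\{|z|\ge \rho\}$ as $\rho\to\infty$. Arguing by contradiction, if one of these three limits fails, there exist $\eta>0$ and a sequence of squares $\{Q_j\}$ on which the mean oscillation is bounded below by $\eta$. Using the sign splitting $Q_j=Q_j^+\cup Q_j^-$ on which $b-b_{Q_j}$ is positive/negative (available because $b$ is real) and exploiting the explicit form of the kernel $-\frac{1}{\pi(z-u)^2}$ of $\cb$ via a suitable rotation aligning the test function with the principal directions of this kernel, I would construct normalized functions $f_j$ supported near $Q_j$ with $\|f_j\|_{L_w^{p,\,\kappa}(\cc)}\ls 1$ and $\|[b,\cb]f_j\|_{L_w^{p,\,\kappa}(\cc)}\gs \eta$, while $\{f_j\}$ is weakly null in the Morrey sense. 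The assumed compactness of $[b,\cb]$ would then force a norm-convergent subsequence of $\{[b,\cb]f_j\}$, contradicting the uniform lower bound $\eta$.

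The main obstacle I anticipate is part (ii), specifically the test-function construction in the weighted Morrey norm. The normalization factor $[w(Q(z,r))]^\kappa$ is not translation invariant because $w\in A_p(\cc)$ is not, so producing test functions $f_j$ with uniformly comparable $L_w^{p,\,\kappa}(\cc)$ norms, especially when $Q_j$ escapes to infinity (the third condition above), requires careful use of the $A_p$ doubling and reverse-doubling properties of $w$ together with precise scaling of $f_j$ relative to $Q_j$. A parallel subtlety appears in (i) when verifying the Fr\'echet-Kolmogorov conditions uniformly across scales, since the weight $w$ can behave very differently on distant or nested squares and the Morrey supremum interacts non-trivially with the translation and dilation arguments used to prove tightness and equicontinuity.
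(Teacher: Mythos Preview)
Your plan for (i) is essentially the paper's approach, with one refinement you omit: rather than verifying the Fr\'echet--Kolmogorov conditions directly for $[b_j,\cb]$, the paper introduces a smooth truncation $\cb_\eta$ (kernel $K_{\cb}(z,u)\varphi(|z-u|/\eta)$), shows $\|[b_j,\cb_\eta]-[b_j,\cb]\|\ls\eta\|\nabla b_j\|_{L^\infty(\cc)}$ via the maximal function, and then verifies the three conditions for $[b_j,\cb_\eta]$. This extra layer makes the equicontinuity check routine. Your ``additional condition capturing the Morrey supremum structure at small scales'' is unnecessary: the paper cites the three-condition sufficient criterion of Mao--Sun--Wu (bounded, uniformly vanishing at infinity, uniformly equicontinuous) and nothing more.

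For (ii) there is a genuine gap in your endgame. You argue: $\{f_j\}$ bounded, $[b,\cb]$ compact, hence $\{[b,\cb]f_j\}$ has a norm-convergent subsequence, and the ``weakly null'' property of $\{f_j\}$ forces the limit to be $0$, contradicting $\|[b,\cb]f_j\|_{L_w^{p,\kappa}(\cc)}\gtrsim\eta$. But weighted Morrey spaces are neither reflexive nor separable, their duals are complicated, and it is not at all clear in what sense your normalized test functions (roughly $[w(Q_j)]^{(\kappa-1)/p}$ times a signed characteristic function on $Q_j$) are weakly null---particularly when $|Q_j|\to 0$, where the pointwise values blow up. A norm-convergent subsequence with norms bounded below by $\eta$ is no contradiction unless you can force the limit to be zero, and that step is missing.

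The paper avoids weak convergence entirely. It proves, via two quantitative lemmas, that for a suitably extracted subsequence
\[
\lf\|[b,\cb]f_{j_\ell}-[b,\cb]f_{j_{\ell+m}}\r\|_{L_w^{p,\kappa}(\cc)}\ge C>0\qquad\text{for all }\ell,m,
\]
so $\{[b,\cb]f_j\}$ has no Cauchy subsequence. This requires not merely a lower bound on $\|[b,\cb]f_j\|$ but a \emph{localized} lower bound $\int_{Q_j^k}|[b,\cb]f_j|^pw\,dz\gtrsim\delta^p3^{-2kp}[w(Q_j)]^{\kappa-1}w(3^kQ_j)$ on shifted dilates $Q_j^k$ of $Q_j$, together with a matching \emph{upper} bound on annuli $3^{k+1}Q_j\setminus 3^kQ_j$ (Lemma~\ref{l-cmo-contra}), so that the contributions of $f_{j_\ell}$ and $f_{j_{\ell+m}}$ can be separated. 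The non-translation-invariance of $w$ that you correctly flag is precisely what forces a case split: when the $Q_j$ can be chosen pairwise far apart at a large fixed scale (Lemma~\ref{l-comp contra awy ori}), one compares the mass of $[b,\cb]f_j$ and $[b,\cb]f_{j+m}$ directly on $C_1Q_j$; when the $Q_j$ are nested or have centers confined to a bounded region, a different bookkeeping exploiting monotonicity of $w(Q_{j_\ell})$ is used. Your proposal does not supply a substitute for this separation argument, and a bare lower bound on the full Morrey norm will not produce one.
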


As an application of Theorem \ref{t-Beurling com compact}, we have the following result
on the Beltrami equation. In what follows, $\partial:=\frac{\partial}{\partial z}$,
$\bar\partial:=\frac{\partial}{\partial \bar z}$ and, for any $r\in(1,\fz)$, the
\emph{Lebesgue space} $L^r(\cc)$ is defined to be the set of all measurable functions
$f$ such that
$$\|f\|_{L^r(\cc)}:=\lf[\int_\cc |f(u)|^r\,du\r]^{1/r}<\fz.$$

\begin{thm}\label{t-Bretrami equ}
Let $p\in(1,\infty)$, $\kappa\in(0,1)$, $w\in A_p(\cc)$ and $b\in{\rm CMO}(\cc)$
such that $b$ has a compact support and  $\|b\|_{\linc}<1$. Then the equation
\begin{equation}\label{e-Bre equ}
\bar\partial f(z)-b(z)\partial f(z)=g(z)\quad w\mathrm{-}a.\,e.\ z\in\cc
\end{equation}
has, for any $g\in L_w^{p,\,\kappa}(\cc)\bigcap L^r(\cc)$ with some $r\in(1,\infty)$,
a solution $f$ with $|Df|:=|\partial f|+ |\overline{\partial}f|\in L_w^{p,\,\kappa}(\cc)$,
which is unique up to an additive constant.
Moreover, there exists a positive constant $C$, depending on $b$, $p$ and $\kappa$,
such that
\begin{equation}\label{e-Bre equ regu}
\lf\|\lf|Df\r|\r\|_{L_w^{p,\,\kappa}(\cc)}\le C\|g\|_{L_w^{p,\,\kappa}(\cc)}.
\end{equation}
\end{thm}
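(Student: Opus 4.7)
The plan is to convert the Beltrami equation \eqref{e-Bre equ} into an operator equation on $L_w^{p,\,\kappa}(\cc)$, invert the resulting operator using Theorem \ref{t-Beurling com compact}, and reconstruct $f$ via the Cauchy transform. Using the classical intertwining $\cb(\bar\partial\varphi)=\partial\varphi$ valid for Schwartz $\varphi$ and extended by density, setting $h:=\bar\partial f$ turns \eqref{e-Bre equ} into
\begin{equation*}
(Id-b\cb)h=g.
\end{equation*}
Conversely, given such an $h$, the Cauchy transform $f(z):=\frac{1}{\pi}\int_\cc\frac{h(u)}{z-u}\,du$ satisfies $\bar\partial f=h$ and $\partial f=\cb h$, so $|Df|\le|h|+|\cb h|$; hence the target estimate \eqref{e-Bre equ regu} will follow from the boundedness of $\cb$ on $L_w^{p,\,\kappa}(\cc)$ (recalled just before Theorem \ref{t-Beurling com bounded}) together with a matching bound on $h$ in the same space.

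The central step is to establish that $T:=Id-b\cb$ is invertible on $L_w^{p,\,\kappa}(\cc)$. Since $\cb$ is an isometry on $L^2(\cc)$ and $\|b\|_{\linc}<1$, the Neumann series shows that $T$ is invertible on $L^2(\cc)$, and the Iwaniec--Clop-Cruz theory \cite{Iwaniec92,ClopCruz13AASFM} extends this to $L^q(\cc)$ for every $q\in(1,\infty)$; in particular, the hypothesis $g\in L^r(\cc)$ furnishes a unique $h_0\in L^r(\cc)$ with $Th_0=g$. To upgrade $h_0$ to $L_w^{p,\,\kappa}(\cc)$, I invoke Theorem \ref{t-Beurling com compact}: the commutator $[b,\cb]$ is compact on $L_w^{p,\,\kappa}(\cc)$. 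Combining this compactness with the identity $b\cb=\cb b+[b,\cb]$ and a Fredholm-alternative argument in the spirit of Iwaniec and Clop-Cruz, $T$ is Fredholm of index zero on $L_w^{p,\,\kappa}(\cc)$. For injectivity on this space, note that if $Th=0$ with $h\in L_w^{p,\,\kappa}(\cc)$, then $h=b\cb h$, so the compact support of $b$ forces $h$ itself to have compact support; since $w\in A_p(\cc)$ and $h\in L_w^{p,\,\kappa}(\cc)$, an application of H\"older's inequality with the $A_p$-weight places this compactly supported $h$ in some $L^{q_0}(\cc)$, where the classical $L^{q_0}$-uniqueness yields $h=0$. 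Thus $T$ is invertible on $L_w^{p,\,\kappa}(\cc)$, so $h=T^{-1}g\in L_w^{p,\,\kappa}(\cc)$ with the desired quantitative bound.

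Uniqueness of $f$ up to an additive constant is an immediate consequence of the injectivity of $T$: any two solutions differ by $\tilde f$ with $\bar\partial\tilde f\in L_w^{p,\,\kappa}(\cc)\cap\ker T$, hence $\bar\partial\tilde f=0$, and then $\partial\tilde f=\cb(\bar\partial\tilde f)=0$, so $\tilde f$ is constant. The main obstacle is establishing the Fredholm property and injectivity of $T$ on the Morrey space $L_w^{p,\,\kappa}(\cc)$: a direct Neumann expansion of $T^{-1}$ need not converge because $\|\cb\|_{L_w^{p,\,\kappa}(\cc)\to L_w^{p,\,\kappa}(\cc)}\|b\|_{\linc}$ may exceed $1$, and extrapolation-type arguments on the Morrey scale are delicate. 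The compactness conclusion of Theorem \ref{t-Beurling com compact} is precisely what makes the Fredholm alternative available, while the compact support of $b$ is what bridges between the Lebesgue setting, where the Iwaniec--Clop-Cruz theorem supplies the candidate solution $h_0$ and the kernel uniqueness, and the Morrey setting, where the final invertibility and the bound \eqref{e-Bre equ regu} are obtained.
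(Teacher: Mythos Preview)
Your overall architecture matches the paper's: reduce to inverting $T=Id-b\cb$ on $L_w^{p,\kappa}(\cc)$, obtain the Fredholm property from the compactness of $[b,\cb]$ (Theorem~\ref{t-Beurling com compact}), get index zero by homotopy, prove injectivity by using the compact support of $b$ to pass to a Lebesgue space, and then rebuild $f$ via the Cauchy transform. The injectivity and uniqueness pieces you wrote are essentially what the paper does.

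The one place where your proposal is genuinely incomplete is the Fredholm step. Writing $b\cb=\cb b+[b,\cb]$ and invoking compactness of $[b,\cb]$ only shows that $T$ is a compact perturbation of $Id-\cb b$; but $Id-\cb b$ suffers from exactly the same defect you yourself flagged, namely that $\|\cb\|_{L_w^{p,\kappa}\to L_w^{p,\kappa}}\|b\|_{L^\infty}$ may exceed $1$, so there is no reason for $Id-\cb b$ to be invertible or Fredholm on $L_w^{p,\kappa}(\cc)$. The ``argument in the spirit of Iwaniec and Clop--Cruz'' that actually works, and that the paper carries out, is more delicate: one sets $P_{N-1}:=\sum_{k=0}^{N-1}(b\cb)^k$, so that
\[
(Id-b\cb)P_{N-1}=P_{N-1}(Id-b\cb)=Id-(b\cb)^N=(Id-b^N\cb^N)+K_N,
\]
where $K_N$ is a finite sum of bounded operators each containing $[b,\cb]$ as a factor, hence compact on $L_w^{p,\kappa}(\cc)$ by Theorem~\ref{t-Beurling com compact}. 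The crucial extra input is that $\cb^N$ is again a convolution Calder\'on--Zygmund operator with explicit kernel $(-1)^N N\,\bar z^{\,N-1}/(\pi z^{N+1})$, and both its $L^p_w$-norm and its Calder\'on--Zygmund constant grow only polynomially in $N$; via the argument of Lemma~\ref{l-sub opr bdd} this yields $\|b^N\cb^N\|_{L_w^{p,\kappa}\to L_w^{p,\kappa}}\lesssim N^2\|b\|_{L^\infty}^N$, which is $<1$ for $N$ large. Thus $Id-b^N\cb^N$ is invertible on $L_w^{p,\kappa}(\cc)$, the displayed identity produces an explicit two-sided parametrix for $Id-b\cb$ modulo compacts, and only then is $T$ Fredholm. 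Once you insert this iteration-plus-polynomial-growth step in place of the single commutator identity, your outline becomes the paper's proof.
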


An outline of this article is in order.

In Section \ref{s2.5}, we give the proof of Theorem \ref{t-Beurling com bounded}.
In this section, we first obtain a simple but useful auxiliary lemma
(see Lemma \ref{product set of b} below), which is on the domination of
$|b(z)-\alpha_{\wz Q}(b)|$ for %the median value $\alpha_{\widetilde Q}(b)$ of
a given real-valued function $b\in L^1_{\mathrm{loc}}(\cc)$ by the difference
$|b(z)-b(u)|$ pointwise on subsets of $Q\times \widetilde Q$, where $Q$ and $\widetilde Q$
are squares and $\alpha_{\widetilde Q}(b)$ is the median value of $b$ over $\wz Q$ .
Compared to  \cite{LernerOmbrosiRivera17arxiv,TaoYangYang18arxiv},
our method adopted in the proof of Theorem \ref{t-Beurling com bounded} avoids
the use of the so-called local mean oscillation.

Section \ref{s3} is devoted to the proof of Theorem \ref{t-Beurling com compact} and
is divided into two subsections.
We give the proof of Theorem \ref{t-Beurling com compact}(i) in Subsection \ref{s3.1}.
Observe that, for any $p\in(1,\fz)$, $\kappa\in(0,1)$ and a general weight
$w\in A_p(\cc)$, $L_w^{p,\,\kappa}(\cc)$ is not invariant under translations.
Thus, in the proof of Theorem \ref{t-Beurling com compact}(i),
we use some ideas from \cite{KrantzLi01JMAAb,ClopCruz13AASFM} via first establishing the
boundedness of a maximal operator $\mathcal B_\ast$ of $\{\mathcal B_\eta\}_{\eta\in(0,\fz)}$,
a family of smooth truncated Beurling-Ahlfors transforms, on $L_w^{p,\,\kappa}(\cc)$.
Applying a version of the Fr\'echet-Kolmogorov theorem suitable for $L_w^{p,\,\kappa}(\cc)$,
and the $L_w^{p,\,\kappa}(\cc)$-boundedness of both $\mathcal B_\ast$ and
the Hardy-Littlewood maximal operator $\cm$,  we obtain the
$L_w^{p,\,\kappa}(\cc)$-compactness of the commutator $[b, \mathcal B_\eta]$ for
$b\in C^\infty_c(\cc)$. With a density argument involving the
$L_w^{p,\,\kappa}(\cc)$-boundedness of $[b, \mathcal B]$ and $\cm$, we 
further show the  $L_w^{p,\,\kappa}(\cc)$-compactness of $[b, \mathcal B]$ for any $b\in {\rm CMO}(\cc)$.

Subsection \ref{s3.2} is devoted to the proof of Theorem \ref{t-Beurling com compact}(ii).
As in the unweighted case (see, for example, \cite{Uchiyama78TohokuMathJ,TaoYangYang18arxiv}),
we first obtain a lemma for the upper and the lower bounds of integrals of
$[b, \mathcal B]f_j$ related to certain squares $Q_j$,
for any real-valued function $b\in{\rm BMO}(\cc)$ and proper functions $f_j$
defined by $Q_j$ with $j\in\nn$; see Lemma \ref{l-cmo-contra} below.
Since a general $A_p(\cc)$ weight is not invariant under translations, besides
Lemma \ref{l-cmo-contra}, we also obtain a  variant of Lemma \ref{l-cmo-contra},
where the geometrical relation of $\{Q_j\}_{j\in\nn}$ are involved;
see Lemma \ref{l-comp contra awy ori} below.
Using Lemmas \ref{l-cmo-contra} and \ref{l-comp contra awy ori} as well as
 an equivalent characterization of CMO$(\cc)$ established by Uchiyama
\cite{Uchiyama78TohokuMathJ}, we give the proof of
Theorem \ref{t-Beurling com compact}(ii) via a contradiction argument.

In Section \ref{s4}, we study the Beltrami equation and  present the proof of
Theorem \ref{t-Bretrami equ} as an application of Theorem \ref{t-Beurling com compact}.
We follow the ideas used in \cite{Iwaniec92} (or \cite{ClopCruz13AASFM}) and apply
some known properties of $\mathcal B$ and the index theory of Fredholm operators.

Finally, we make some conventions on notation. Throughout the article,
we denote by $C$ and $\wz C$ { positive constants} which are independent
of the main parameters, but they may vary from line to line.
Positive constants with subscripts, such as $C_0$ and $\wz C_1$, do not change in
different occurrences. If $f\le Cg$, we then write $f\ls g$ or $g\gs f$
and, if $f \ls g\ls f$, we then write $f\sim g.$

\section{Boundedness characterization of Beurling-Ahlfors commutators}\label{s2.5}

This section is devoted to the proof of Theorem \ref{t-Beurling com bounded}.
Since Theorem \ref{t-Beurling com bounded}(i) is a corollary of
\cite[Theorem 3.4]{KomoriShirai09MathNachr},
it suffices to prove Theorem \ref{t-Beurling com bounded}(ii).
Compared to the method used in \cite{LernerOmbrosiRivera17arxiv,TaoYangYang18arxiv},
our method avoids the use of the so-called local mean oscillation; see also \cite{GuoHeWuYang18arxiv,Janson78ArkMat}.

Here and hereafter, for any $z\in\cc$, square $Q\subset\cc$ and $f\in L^1_{\loc}(\cc)$,
$$Q+z:=\{u+z:\ u\in Q\}$$ and
$$\co (f;Q):=\frac 1{|Q|}\int_{Q}\lf|f(z)-\langle f\rangle_{Q}\r|\,dz
\quad \mathrm{with}\quad \langle f\rangle_{Q}:=\frac{1}{|Q|}\int_Q f(z)\,dz.$$
We first recall the \emph{median value} $\alpha_Q(f)$ in \cite{John65,Stromberg79IUMJ,Journe83,JawerthTorchinsky85JApproxTheory}.
For any real-valued function $f\in L^1_{\rm loc}(\cc)$ and square $Q\subset \cc$,
let $\alpha_Q(f)$ be a real number such that
$$\inf_{c\in\rr}\frac{1}{|Q|}\int_Q|f(z)-c|\,dz$$
is attained. %, where $z:=x+i y\in Q$.
Moreover,  it is known that $\alpha_Q(f)$ satisfies that
\begin{equation}\label{median value-1}
   \lf|\lf\{z\in Q:\ f(z)\geq\alpha_Q(f)\r\}\r|\le \frac{|Q|}2
\end{equation}
and
\begin{equation}\label{median value-2}
   \lf|\lf\{z\in Q:\ f(z)\leq\alpha_Q(f)\r\}\r|\le \frac{|Q|}2;
\end{equation}
see \cite[p.\,30]{Journe83}.

\begin{lem}\label{product set of b}
  Let $b$ be a real-valued measurable function on $\cc$.
  Then, for any square $Q:=Q(z_0,r_0)\subset\cc$ with $z_0\in\cc$ and $r_0\in(0,\infty)$,
  there exist measurable sets $E_1,\,E_2\subset Q$ and
  $F_1,\,F_2\subset\widetilde{Q}:=Q+\widetilde{z_0}$ with
  $\widetilde{z_0}:=4r_0+i4r_0$ such that
\begin{enumerate}
    \item[{\rm(i)}] $Q=E_1\bigcup E_2,\,\widetilde{Q}=F_1\bigcup F_2$
                     and $|F_j|\geq\frac{1}{2}\lf|\widetilde{Q}\r|,\,j\in\{1,2\};$
    \item[{\rm(ii)}] $|b(z)-\alpha_{\widetilde{Q}}(b)|\le|b(z)-b(u)|, \,\,
                     \forall(z,u)\in E_j\times F_j,\,j\in\{1,2\};$
    \item[{\rm(iii)}] for any $(z,u)\in E_j\times F_j$ with $j\in\{1,2\}$,
    both $(x-\zeta)(y-\eta)$ and $b(z)-b(u)$ do not change sign,
    where $z:=x+iy$ and $u:=\zeta+i\eta$ with $x,\,y,\,\zeta,\,\eta\in\rr$.
\end{enumerate}
\end{lem}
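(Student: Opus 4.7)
The plan is to exploit the geometric separation built into the definition of $\widetilde{Q}$ together with a level-set decomposition of $b$ about its median value $\alpha_{\widetilde{Q}}(b)$. I will first observe that the translation vector $\widetilde{z_0}=4r_0+i4r_0$ is chosen precisely so that $\widetilde{Q}$ lies strictly in the ``upper-right'' quadrant relative to $Q$: if $z=x+iy\in Q$ then $x\leq x_0+r_0$ and $y\leq y_0+r_0$, while if $u=\zeta+i\eta\in \widetilde{Q}$ then $\zeta\geq x_0+3r_0$ and $\eta\geq y_0+3r_0$. Hence $\zeta-x\geq 2r_0>0$ and $\eta-y\geq 2r_0>0$ for every such pair, so the product $(x-\zeta)(y-\eta)$ is strictly positive and, in particular, has constant sign throughout $Q\times\widetilde{Q}$. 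This already handles one half of condition (iii), independently of how the sets are partitioned.

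Next, with $\alpha:=\alpha_{\widetilde{Q}}(b)$, I will define
\begin{equation*}
E_1:=\{z\in Q:\,b(z)\geq\alpha\},\quad E_2:=\{z\in Q:\,b(z)\leq\alpha\},
\end{equation*}
\begin{equation*}
F_1:=\{u\in\widetilde{Q}:\,b(u)\leq\alpha\},\quad F_2:=\{u\in\widetilde{Q}:\,b(u)\geq\alpha\}.
\end{equation*}
Measurability is immediate since $b$ is measurable. Since every real number is either $\geq\alpha$ or $\leq\alpha$, we have $Q=E_1\cup E_2$ and $\widetilde{Q}=F_1\cup F_2$ (the overlap at $\{b=\alpha\}$ is harmless, as the lemma does not demand disjointness). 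The measure bound $|F_j|\geq\frac12|\widetilde{Q}|$ for $j\in\{1,2\}$ is exactly the content of \eqref{median value-1} and \eqref{median value-2}, which thus gives (i).

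To verify (ii) and the remaining part of (iii), I will argue pairwise. For $(z,u)\in E_1\times F_1$ one has $b(z)\geq\alpha\geq b(u)$, so $b(z)-b(u)\geq 0$ has constant sign, and
\begin{equation*}
|b(z)-\alpha_{\widetilde{Q}}(b)|=b(z)-\alpha\leq b(z)-b(u)=|b(z)-b(u)|,
\end{equation*}
since $b(u)\leq\alpha$. The case $(z,u)\in E_2\times F_2$ is completely symmetric: $b(z)-b(u)\leq 0$ everywhere, and $|b(z)-\alpha|=\alpha-b(z)\leq b(u)-b(z)=|b(z)-b(u)|$. Combined with the sign observation from the first paragraph, this yields (iii).

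I do not expect any real obstacle here: the lemma is essentially a clever packaging of two elementary ingredients, namely the defining measure-theoretic property of the median value and the fact that the translation $\widetilde{z_0}=4r_0+i4r_0$ forces both coordinate differences $x-\zeta$ and $y-\eta$ to have the same (negative) sign. The only point to be careful about is not to confuse the roles of $E_j$ and $F_j$ in the inequality $|b(z)-\alpha|\leq|b(z)-b(u)|$: the side containing $z$ must be on the same side of $\alpha$ as the value $b(z)$, while the side containing $u$ must lie on the opposite side, which is exactly how the definitions of $E_1,F_1$ and $E_2,F_2$ are arranged above.
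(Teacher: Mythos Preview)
Your proof is correct and follows essentially the same approach as the paper: the sets $E_1,E_2,F_1,F_2$ are defined identically, the measure bound in (i) comes from \eqref{median value-1}--\eqref{median value-2}, and (ii) follows from the same ``same side/opposite side of $\alpha$'' argument. Your explicit verification that $(x-\zeta)(y-\eta)>0$ on all of $Q\times\widetilde{Q}$ is a useful detail that the paper leaves implicit (it simply asserts that (iii) is easy to see).
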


\begin{proof}
  For the given squares $Q$ and $\widetilde{Q}$, let
  $$E_1:=\lf\{z\in Q:\ b(z)\geq\alpha_{\widetilde{Q}}(b)\r\}\quad\mathrm{and}
  \quad E_2:=\lf\{z\in Q:\ b(z)\leq\alpha_{\widetilde{Q}}(b)\r\};$$
  $$F_1:=\lf\{u\in\widetilde{Q}:\ b(u)\leq\alpha_{\widetilde{Q}}(b)\r\}\quad\mathrm{and}
  \quad F_2:=\lf\{u\in\widetilde{Q}:\ b(u)\geq\alpha_{\widetilde{Q}}(b)\r\}.$$
  It is easy to see that $\{E_j\times F_j\}_{j=1}^2$ satisfies (iii).
  Then, by \eqref{median value-1} and \eqref{median value-2}, we have
  $|F_j|\geq\frac{1}{2}|\widetilde{Q}|,\,j\in\{1,2\}$, that is, (i) holds true.
  Moreover, for any $(z,u)\in E_j\times F_j,\,j\in\{1,2\}$,
  $$|b(z)-b(u)|=\lf|b(z)-\alpha_{\widetilde{Q}}(b)\r|+\lf|\alpha_{\widetilde{Q}}(b)-b(u)\r|
  \geq\lf|b(z)-\alpha_{\widetilde{Q}}(b)\r|$$
  and hence (ii) holds true.
  This finishes the proof of Lemma \ref{product set of b}.
\end{proof}

Now, we give the proof of Theorem \ref{t-Beurling com bounded}(ii).
\begin{proof}
  To show that $b\in\rm{BMO}(\cc)$, it suffices to show that,
  for any square $Q\subset\cc$, $\co(b;Q)\lesssim 1.$
  Let $Q$ be a square in $\cc$ and $\widetilde{Q}$, $E_j,\,F_j,\,j\in\{1,2\}$ be as in
  Lemma \ref{product set of b}. Since $b$ is real-valued,
  from the H\"{o}lder inequality and the boundedness of $[b,\mathcal{B}]$
  on $L^{p,\,\kappa}_w(\cc)$, we deduce that
  \begin{align*}
    \co(b;Q)&\lesssim\frac{1}{|Q|}\int_Q\lf|b(z)-\alpha_{\widetilde{Q}}(b)\r|\,dz
    \sim\sum_{j=1}^2\frac{1}{|Q|}\int_{E_j}\lf|b(z)-\alpha_{\widetilde{Q}}(b)\r|\,dz\\
    &\lesssim\sum_{j=1}^2\frac{1}{|Q|}\int_{E_j}\int_{F_j}
    \frac{|b(z)-\alpha_{\widetilde{Q}}(b)|}{|Q|}\,du\,dz
    \sim\sum_{j=1}^2\frac{1}{|Q|}\int_{E_j}\int_{F_j}
    \frac{|b(z)-\alpha_{\widetilde{Q}}(b)|}{|z-u|^2}\,du\,dz\\
    &\lesssim\sum_{j=1}^2\frac{1}{|Q|}\int_{E_j}\int_{F_j}
    |b(z)-b(u)|\frac{|(x-\zeta)(y-\eta)|}{|z-u|^4}\,du\,dz\\
    &\sim\sum_{j=1}^2\frac{1}{|Q|}\int_{E_j}\lf|\int_{F_j}
    [b(z)-b(u)]\mathfrak{Im}K_\mathcal{B}(z,u)\,du\r|\,dz\\
    &\lesssim\sum_{j=1}^2\frac{1}{|Q|}\int_{E_j}\lf|\int_{F_j}
    [b(z)-b(u)]K_\mathcal{B}(z,u)\,du\r|\,dz
    \sim\sum_{j=1}^2\frac{1}{|Q|}\int_{E_j}\lf|[b,\mathcal{B}]\chi_{F_j}(z)\r|\,dz\\
    &\lesssim\sum_{j=1}^2\frac{1}{|Q|}\int_{Q}\lf|[b,\mathcal{B}]\chi_{F_j}(z)\r|\,dz
    \lesssim\sum_{j=1}^2\frac{1}{|Q|}\lf\|[b,\mathcal B]\chi_{F_j}\r\|_{L_w^{p,\,\kappa}(\cc)}
    [w(Q)]^{\frac{\kappa-1}{p}}|Q|\\
    &\lesssim\sum_{j=1}^2\lf\|[b,\mathcal B]\r\|_{L_w^{p,\,\kappa}(\cc)\to
    L_w^{p,\,\kappa}(\cc)}\lf\|\chi_{F_j}\r\|_{L_w^{p,\,\kappa}(\cc)}
    [w(Q)]^{\frac{\kappa-1}{p}}\\
    &\lesssim\sum_{j=1}^2\|[b,\mathcal B]\|_{L_w^{p,\,\kappa}(\cc)\to
    L_w^{p,\,\kappa}(\cc)}\lf[w\lf(F_j\r)\r]^{\frac{1-\kappa}{p}} [w(Q)]^{\frac{\kappa-1}{p}}\\
    &\lesssim\|[b,\mathcal B]\|_{L_w^{p,\,\kappa}(\cc)\to
    L_w^{p,\,\kappa}(\cc)}\lf[w\lf(\widetilde{Q}\r)\r]^{\frac{1-\kappa}{p}}
    [w(Q)]^{\frac{\kappa-1}{p}}
    \lesssim\|[b,\mathcal B]\|_{L_w^{p,\,\kappa}(\cc)\to L_w^{p,\,\kappa}(\cc)},
  \end{align*}
  where $\mathfrak{Im}K_\mathcal{B}(z,u)$ denotes the \emph{imaginary part} of
  $K_\mathcal{B}(z,u)$. This finishes the proof of Theorem \ref{t-Beurling com bounded}.
\end{proof}

\section{Compactness characterization of Beurling-Ahlfors commutators}\label{s3}

This section is devoted to the proof of Theorem \ref{t-Beurling com compact}.
We present the proof of Theorem \ref{t-Beurling com compact}(i) in Subsection \ref{s3.1}
and the proof of Theorem \ref{t-Beurling com compact}(ii) in Subsection \ref{s3.2}.

\subsection{Proof of Theorem \ref{t-Beurling com compact}(i)}\label{s3.1}
We first recall  a sufficient condition for  subsets of weighted Morrey spaces
to be relatively compact from \cite{MaoSunWu16ActaMathSin}. Recall that
a subset $\cf$ of $L_w^{p,\,\kappa}(\cc)$ is said to be \emph{totally bounded}
(or \emph{relatively compact}) if the $L_w^{p,\,\kappa}(\cc)$-closure of
$\cf$ is compact.

\begin{lem}\label{l-fre kol}
For any $p\in(1,\infty)$, $\kappa\in(0,1)$ and $w\in A_p(\cc)$, a subset $\cf$ of
$L_w^{p,\,\kappa}(\cc)$ is totally bounded (or relatively compact) if
the set $\cf$ satisfies the following three conditions:
\begin{itemize}
\item[{\rm(i)}] $\cf$ is bounded, namely, $$\sup_{f\in\cf}\|f\|_{L_w^{p,\,\kappa}(\cc)}<\infty;$$
\item[{\rm(ii)}] $\cf$ uniformly vanishes at infinity, namely, for any $\epsilon\in(0,\infty)$,
there exists some positive constant $M$ such that, for any $f\in\cf$,
$$\lf\|f\chi_{\{z\in\cc:\ |z|>M\}}\r\|_{L_w^{p,\,\kappa}(\cc)}<\epsilon;$$

\item[{\rm(iii)}] $\cf$ is uniformly equicontinuous, namely, for any $\epsilon\in(0,\infty)$,
there exists some positive constant $\rho$ such that,
for any $f\in\cf$ and $\xi\in \cc$ with $|\xi|\in[0,\rho)$,
$$\|f(\cdot+\xi)-f(\cdot)\|_{L_w^{p,\,\kappa}(\cc)}<\epsilon.$$
\end{itemize}
\end{lem}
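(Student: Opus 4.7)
The plan is to follow the classical Fr\'echet--Kolmogorov compactness strategy, adapted to the weighted Morrey setting: given $\ez\in(0,\fz)$, I would construct a finite $\ez$-net for $\cf$ in $L_w^{p,\,\kappa}(\cc)$ by combining spatial truncation, mollification, and an Arzel\`a--Ascoli argument on a family of continuous functions with common compact support.

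First, I would use condition (ii) to pick $M\in(0,\fz)$ so large that, for every $f\in\cf$, the tail $f\chi_{\{z\in\cc:\,|z|>M\}}$ has $L_w^{p,\,\kappa}(\cc)$-norm less than $\ez/3$; this reduces the approximation problem to the truncated family $\cf_M:=\{f\chi_{Q(0,M)}:f\in\cf\}$. Next, I would fix a nonnegative mollifier $\phi\in C_c^\fz(\cc)$ supported in $Q(0,1)$ with $\int_\cc\phi\,du=1$, set $\phi_\rho(z):=\rho^{-2}\phi(z/\rho)$ for $\rho\in(0,1]$, and write
$$ f(\cdot)-(f*\phi_\rho)(\cdot)=\int_\cc\phi_\rho(\xi)\lf[f(\cdot)-f(\cdot-\xi)\r]\,d\xi. $$
Invoking the integral Minkowski inequality for $L_w^{p,\,\kappa}(\cc)$ (which follows from the usual Minkowski inequality on each $L^p(Q,w\,du)$ followed by taking the supremum over squares $Q$) and combining with condition (iii), I would choose $\rho$ small enough that $\|f-f*\phi_\rho\|_{L_w^{p,\,\kappa}(\cc)}<\ez/3$ uniformly for $f\in\cf$.

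After these reductions, every $f\in\cf$ lies within $2\ez/3$ of the mollified truncated family $\cg:=\{(f\chi_{Q(0,M)})*\phi_\rho:f\in\cf\}$, whose members are continuous and all supported in $\overline{Q(0,M+1)}$. I would then prove that $\cg$ is totally bounded in the sup-norm on $\overline{Q(0,M+1)}$ via the Arzel\`a--Ascoli theorem. Uniform boundedness of $\cg$ comes from the pointwise estimate $|((f\chi_{Q(0,M)})*\phi_\rho)(z)|\le\|\phi_\rho\|_{L^\fz(\cc)}\int_{Q(0,M)}|f(u)|\,du$, together with H\"older's inequality (using $w\in A_p(\cc)$ to ensure $\int_{Q(0,M)}w^{1-p'}\,du<\fz$) applied to the defining norm of $f$ in $L_w^{p,\,\kappa}(\cc)$ and condition (i); uniform equicontinuity follows similarly from the gradient bound $\|\nabla\phi_\rho\|_{L^\fz(\cc)}\ls\rho^{-3}$.

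Arzel\`a--Ascoli then produces a finite sup-norm $\eta$-net $\{g_1,\ldots,g_N\}\subset\cg$ for any prescribed $\eta\in(0,\fz)$. To convert this into an $L_w^{p,\,\kappa}(\cc)$-net, I would establish the elementary bound
$$ \|g\|_{L_w^{p,\,\kappa}(\cc)}\le C_{M,\rho,w,\kappa}\|g\|_{L^\fz(\cc)} $$
valid for any $g$ supported in $\overline{Q(0,M+1)}$, by splitting the defining supremum according to whether the square $Q$ sits inside a fixed doubling of $Q(0,M+1)$ or contains it. Choosing $\eta$ so that $C_{M,\rho,w,\kappa}\eta<\ez/3$ and assembling the three estimates then yields the desired $\ez$-net. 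The main obstacle I anticipate is precisely this last conversion: the weighted Morrey norm of a bounded compactly supported function is governed by $\sup_Q[w(Q)]^{-\kappa/p}$ over squares meeting the support, and controlling it uniformly requires invoking local integrability and doubling of the $A_p(\cc)$ weight $w$ to rule out degeneration along very small or very large squares.
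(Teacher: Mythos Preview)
The paper does not prove this lemma at all: it is quoted from \cite{MaoSunWu16ActaMathSin} and used as a black box. Your proposal supplies the standard Fr\'echet--Kolmogorov argument, and the overall strategy is sound.

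Two small comments on the execution. First, the step ``after these reductions, every $f\in\cf$ lies within $2\ez/3$ of $(f\chi_{Q(0,M)})*\phi_\rho$'' does not follow by a bare triangle inequality from the two separate estimates $\|f-f\chi_{Q(0,M)}\|<\ez/3$ and $\|f-f*\phi_\rho\|<\ez/3$: truncation by $\chi_{Q(0,M)}$ need not preserve the equicontinuity in (iii), because the boundary contribution $f[\chi_{Q(0,M)-\xi}-\chi_{Q(0,M)}]$ is not obviously small in the Morrey norm. The clean fix is to write
\[
\|f-(f\chi_{Q(0,M)})*\phi_\rho\|_{L_w^{p,\kappa}(\cc)}
\le \|f-f*\phi_\rho\|_{L_w^{p,\kappa}(\cc)}
+\lf\|\lf(f\chi_{\{|z|>M\}}\r)*\phi_\rho\r\|_{L_w^{p,\kappa}(\cc)}
\]
and bound the second term via $|g*\phi_\rho|\ls\cm g$ together with the $L_w^{p,\kappa}(\cc)$-boundedness of $\cm$ (Lemma~\ref{l-sub opr bdd} in the paper), which gives $\ls\|f\chi_{\{|z|>M\}}\|_{L_w^{p,\kappa}(\cc)}<\ez/3$.

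Second, the obstacle you flag at the end is milder than you suggest: for $g$ supported in $\overline{Q(0,M+1)}$ one has, for every square $Q$,
\[
\frac{1}{[w(Q)]^\kappa}\int_Q|g|^p w
\le \|g\|_{L^\fz(\cc)}^p\,\frac{w(Q\cap Q(0,M+1))}{[w(Q)]^\kappa}
\le \|g\|_{L^\fz(\cc)}^p\,[w(Q(0,M+1))]^{1-\kappa},
\]
by splitting according to whether $w(Q)\le w(Q(0,M+1))$ or not; no doubling is needed, only $w(Q(0,M+1))<\fz$.
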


Inspired by \cite{KrantzLi01JMAAb} (see also \cite{ClopCruz13AASFM}),
before we give the proof of Theorem \ref{t-Beurling com compact},
we first establish the boundedness of the maximal operator $\mathcal B_\ast$
of a family of smooth truncated Beurling-Ahlfors transforms
$\{\mathcal B_\eta\}_{\eta\in(0,\infty)}$ as follows.
For $\eta\in(0,\infty)$, let
$$\mathcal B_\eta f(z):=\int_\cc K_{\mathcal B,\,\eta}(z, u)f(u)\,du,$$
where the kernel  
$K_{\mathcal B,\,\eta}(z,u):=K_{\mathcal B}(z,u)\varphi(\frac{|z-u|}\eta)$ with
$K_{\mathcal B}(z,u)=-\frac1\pi\frac1{(z-u)^2}$ and
$\varphi\in C_c^{\infty}(\rr)$ satisfying that 
$\varphi(t)\equiv 0$ for $t\in(-\fz, \frac12)$, $\varphi(t)\in[0,1]$ for $t\in[\frac12, 1]$
and $\varphi(t)\equiv 1$ for $t\in (1, \infty)$.
Let $$[b, \mathcal B_\eta]f(z):=\int_\cc [b(z)-b(u)]K_{\mathcal B,\,\eta}(z, u)f(u)\,du.$$
Then we have the following conclusion. Recall that
the \emph{Hardy-Littlewood maximal operator $\cm$} is defined by setting, for any
$f\in L^1_{\loc}(\cc)$ and $z\in\cc$,
$$\cm f(z):=\sup_{\mathrm{square}\,Q\ni z}\frac{1}{|Q|}\int_Q|f(u)|\,du,$$
where the supremum is taken over all the squares $Q$ of $\cc$ that contain $z$.

\begin{lem}\label{l-approx commuta}
There exists a positive constant $C$ such that,
for any $b\in C^\fz_c(\cc)$, $\eta\in(0,\infty)$, $f\in L^1_{\loc}(\cc)$ and $z\in\cc$,
$$\lf|\lf[b, \mathcal B_\eta\r]f(z)-\lf[b, \mathcal B\r]f(z)\r|
\le C\eta \lf\|\nabla b\r\|_{L^\infty(\cc)} \cm f(z).$$
\end{lem}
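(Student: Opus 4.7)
The plan is to exhibit the difference $[b,\cb_\eta]f(z)-[b,\cb]f(z)$ as a single integral whose kernel is supported in a small ball of radius $\eta$ around $z$, and then reduce the resulting estimate to a standard dyadic bound by the Hardy--Littlewood maximal function.

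First, I would write
\begin{equation*}
[b,\cb_\eta]f(z)-[b,\cb]f(z)
=\int_\cc [b(z)-b(u)]\,K_\cb(z,u)\lf[\vz\lf(\tfrac{|z-u|}{\eta}\r)-1\r] f(u)\,du.
\end{equation*}
By the hypotheses on $\vz$, the factor $\vz(|z-u|/\eta)-1$ vanishes whenever $|z-u|>\eta$ and has modulus at most $1$ everywhere, so the integrand is supported in $\{u\in\cc:\,|z-u|\le\eta\}$. Since $b\in C^\fz_c(\cc)$, the mean value inequality gives $|b(z)-b(u)|\le\|\nabla b\|_{L^\fz(\cc)}|z-u|$, and by \eqref{cz kernel condition-1} we have $|K_\cb(z,u)|\ls |z-u|^{-2}$. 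Combining these pointwise bounds yields
\begin{equation*}
\lf|[b,\cb_\eta]f(z)-[b,\cb]f(z)\r|
\ls \|\nabla b\|_{L^\fz(\cc)}\int_{|z-u|\le \eta}\frac{|f(u)|}{|z-u|}\,du.
\end{equation*}

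Next I would estimate the remaining integral by the dyadic annular decomposition
$\{u:\,2^{-k-1}\eta<|z-u|\le 2^{-k}\eta\}$ for $k\in\nn\cup\{0\}$. On each annulus the factor $|z-u|^{-1}$ is controlled by $2^{k+1}/\eta$, and the ball of radius $2^{-k}\eta$ about $z$ is contained in a square of side comparable to $2^{-k}\eta$, so the mean of $|f|$ over it is dominated by $\cm f(z)$. Thus
\begin{equation*}
\int_{|z-u|\le\eta}\frac{|f(u)|}{|z-u|}\,du
\ls \sum_{k=0}^\fz \frac{2^{k+1}}{\eta}\,\lf(2^{-k}\eta\r)^2\,\cm f(z)
\ls \eta\,\cm f(z)\sum_{k=0}^\fz 2^{-k}
\ls \eta\,\cm f(z),
\end{equation*}
which combined with the previous display delivers the claimed inequality.

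There is no real obstacle here: the argument is entirely routine once the kernel-level identity above is written down. The only small care needed is to observe that the truncation $\vz(|z-u|/\eta)-1$ is bounded and compactly supported in $\{|z-u|\le\eta\}$ (so that one need not use any $C^1$-regularity of $\vz$), and that $\cm$ as defined in the paper is taken over squares rather than disks, which only costs a harmless absolute constant in the last step.
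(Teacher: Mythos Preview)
Your argument is correct and follows essentially the same route as the paper: both reduce the difference to $\int_{|z-u|\le\eta}|b(z)-b(u)||K_{\mathcal B}(z,u)||f(u)|\,du$, apply the Lipschitz bound on $b$ and the size estimate \eqref{cz kernel condition-1}, and sum over dyadic annuli to obtain $\eta\,\cm f(z)$. Your formulation via the single kernel factor $\vz(|z-u|/\eta)-1$ is slightly cleaner than the paper's explicit splitting into two integrals, but the substance is identical.
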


\begin{proof}
Let $f\in L^1_{\loc}(\cc)$. For any $z\in\cc$, we have
\begin{align*}
&\lf|\lf[b, \mathcal B_\eta\r]f(z)-\lf[b, \mathcal B\r]f(z)\r|\\
&\quad=\lf|\int_{\eta/2<|z-u|\leq\eta} [b(z)-b(u)]K_{\mathcal B,\,\eta}(z,u)f(u)\,du
    -\int_{|z-u|\leq\eta} [b(z)-b(u)]K_{\mathcal B}(z,u)f(u)\,du\r|\\
&\quad\ls\int_{|z-u|\leq\eta} |b(z)-b(u)|\lf|K_{\mathcal B}(z,u)\r||f(u)|\,du.
\end{align*}
From the smoothness of $b$ and \eqref{cz kernel condition-1}, we deduce that
\begin{align*}
\int_{|z-u|\leq\eta} |b(z)-b(u)|\lf|K_{\mathcal B}(z, u)\r||f(u)|\,du
&\ls \lf\|\nabla b\r\|_{L^\infty(\cc)}\sum_{j=0}^\infty
\int_{\frac\eta{2^{j+1}}<|z-u|\leq\frac\eta{2^j}} \frac{|z-u|}{|z-u|^{2}}|f(u)|\,du\\
&\ls \eta\lf\|\nabla b\r\|_{L^\infty(\cc)}\cm f(z),
\end{align*}
which completes the proof of Lemma \ref{l-approx commuta}.
\end{proof}

In what follows, the\emph{ maximal operator $\mathcal B_{\ast}$} is 
defined by setting, for any suitable function $f$ and $z\in\cc$,
$$\mathcal B_{\ast} f(z):=\sup_{\eta\in(0,\infty)}
\lf|\int_{\cc}K_{\mathcal B,\,\eta}(z, u)f(u)\,du\r|.$$
\begin{lem}\label{l-sub opr bdd}
Let $p\in(1,\infty)$, $\kappa\in(0,1)$ and $w\in A_p(\cc)$.
Then there exists a positive constant $C$ such that,
for any $f\in L_w^{p,\,\kappa}(\cc)$,
$$\lf\|\mathcal B_{\ast} f\r\|_{L_w^{p,\,\kappa}(\cc)}
+\|\cm f\|_{L_w^{p,\,\kappa}(\cc)}\leq C\|f\|_{L_w^{p,\,\kappa}(\cc)}.$$
\end{lem}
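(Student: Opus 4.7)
The plan is to reduce both assertions to known weighted Morrey boundedness results and a standard Cotlar-type pointwise domination. The boundedness of the Hardy--Littlewood maximal operator $\mathcal{M}$ on $L^{p,\,\kappa}_w(\mathbb{C})$ for $p\in(1,\infty)$, $\kappa\in(0,1)$ and $w\in A_p(\mathbb{C})$ is not really new: it is exactly one of the results of Komori--Shirai \cite{KomoriShirai09MathNachr} already used above (in fact, it follows from their Theorem that any Calder\'on--Zygmund operator, and $\mathcal{M}$, maps $L^{p,\,\kappa}_w$ to itself). So the $\mathcal{M}$-part of the lemma can be cited directly.

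For $\mathcal{B}_{\ast}$, the plan is the classical comparison of smooth with sharp truncations. First, I would compare $\mathcal{B}_\eta f$ with the sharply truncated singular integral
\[
\mathcal{B}_\eta^{\sharp} f(z):=\int_{|z-u|>\eta} K_{\mathcal{B}}(z,u)f(u)\,du.
\]
Since $\varphi$ is supported in $[1/2,\infty)$, takes values in $[0,1]$, and equals $1$ on $(1,\infty)$, the difference $\mathcal{B}_\eta f(z)-\mathcal{B}_\eta^{\sharp} f(z)$ is an integral over the annulus $\eta/2<|z-u|\le\eta$, where the size estimate \eqref{cz kernel condition-1} gives
\[
\bigl|\mathcal{B}_\eta f(z)-\mathcal{B}_\eta^{\sharp} f(z)\bigr|
\lesssim \int_{|z-u|\le\eta}\frac{|f(u)|}{|z-u|^{2}}\,du \cdot \chi_{|z-u|>\eta/2}
\lesssim \mathcal{M} f(z).
\]
This is a one-line dyadic decomposition on the annulus, analogous to the last display in the proof of Lemma \ref{l-approx commuta}.

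Next I would invoke the classical Cotlar inequality for the Calder\'on--Zygmund operator $\mathcal{B}$: there exists a constant $C$ such that, for every suitable $f$ and every $z\in\mathbb{C}$,
\[
\sup_{\eta\in(0,\infty)}\bigl|\mathcal{B}_\eta^{\sharp} f(z)\bigr|\le C\bigl[\mathcal{M}(\mathcal{B} f)(z)+\mathcal{M} f(z)\bigr].
\]
Combining this with the previous pointwise comparison yields
\[
\mathcal{B}_{\ast} f(z)\lesssim \mathcal{M}(\mathcal{B} f)(z)+\mathcal{M} f(z).
\]
Taking $L^{p,\,\kappa}_w$ norms and using that both $\mathcal{M}$ and $\mathcal{B}$ (as a Calder\'on--Zygmund operator on $\mathbb{C}$) are bounded on $L^{p,\,\kappa}_w(\mathbb{C})$ by \cite[Theorems 3.2 and 3.3]{KomoriShirai09MathNachr} gives the desired estimate
\[
\|\mathcal{B}_{\ast} f\|_{L^{p,\,\kappa}_w(\mathbb{C})}\lesssim \|f\|_{L^{p,\,\kappa}_w(\mathbb{C})}.
\]

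The only potential obstacle is ensuring the Cotlar inequality really applies to the specific smooth truncation $K_{\mathcal{B},\,\eta}$ used here; but since $K_{\mathcal{B}}$ satisfies the standard size and smoothness bounds \eqref{cz kernel condition-1}--\eqref{cz kernel condition-2} and $\mathcal{B}$ is $L^{2}(\mathbb{C})$-bounded, the classical proof (mean value over a ball plus size/smoothness estimates on the tails) goes through verbatim, and the reduction to $\mathcal{B}_\eta^{\sharp}$ above precisely absorbs the smooth cutoff into an $\mathcal{M} f$ term. All other steps are routine invocations of the results from \cite{KomoriShirai09MathNachr}.
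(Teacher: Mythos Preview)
Your argument is correct, but it follows a different route from the paper's own proof. The paper does not invoke Cotlar's inequality at all. Instead, for a fixed square $Q$ it splits $f=f\chi_{2Q}+f\chi_{\cc\setminus 2Q}$, handles the local part $f\chi_{2Q}$ by quoting the weighted $L^p_w(\cc)$-boundedness of $\mathcal B_\ast$ directly (cf.\ \cite[Corollary 7.13]{Duoandikoetxea01}), and estimates the far-away part by the pointwise kernel bound $|K_{\mathcal B,\,\eta}(z,u)|\lesssim |z-u|^{-2}$ together with a dyadic decomposition into annuli $2^{k+1}Q\setminus 2^kQ$, the H\"older inequality, and the $A_p$ property \eqref{e-sigma defn}. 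Your approach is cleaner in that it reduces everything to the already-cited Morrey bounds for $\cm$ and $\cb$; the paper's approach, on the other hand, yields as a by-product that the operator norm of any Calder\'on--Zygmund operator on $L^{p,\,\kappa}_w(\cc)$ depends linearly on its $L^p_w$-norm and its Calder\'on--Zygmund constant---a fact that is explicitly reused in Section~\ref{s4} to control $\|\mathcal B^N\|_{L^{p,\,\kappa}_w\to L^{p,\,\kappa}_w}$ polynomially in $N$. (For the $\cm$-part, the paper cites \cite{AraiMizuhara97MathNachr} rather than \cite{KomoriShirai09MathNachr}, but either reference works.)
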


\begin{proof}
The boundedness of $\cm$ on $L_w^{p,\,\kappa}(\cc)$ was obtained in
\cite{AraiMizuhara97MathNachr}. We only consider the boundedness of $\mathcal B_\ast$.
The argument is standard and we give the proof briefly.
For any fixed square $Q\subset \cc$ and $f\in L_w^{p,\,\kappa} (\cc)$, we write
$$f:=f_1+f_2:=f\chi_{2Q}+ f\chi_{\cc\setminus 2Q}.$$
Observe that $K_{\mathcal B,\,\eta}$ satisfies \eqref{cz kernel condition-1},
\eqref{cz kernel condition-2} and $f_1\in L_w^{p} (\cc)$.
Then, from the boundedness of $\mathcal B_\ast$ on $L_w^{p} (\cc)$ (see, for example,
\cite[p.\,147, Corollary 7.13]{Duoandikoetxea01}),  the H\"older inequality,
Definition \ref{d-Ap} and $w(2Q)\sim w(Q)$ for any square $Q\subset\cc$, we deduce that
\begin{align*}
&\lf[\int_Q|\mathcal B_\ast f(z)|^pw(z)\,dz\r]^{\frac1p}\\
&\quad\ls \lf[\int_Q|\mathcal B_\ast f_1(z)|^pw(z)\,dz\r]^{\frac1p}
  +\sum_{k=1}^\fz\lf\{\int_Q\lf[\int_{2^{k+1}Q\setminus 2^kQ}
  \frac{|f(u)|}{|z-u|^2}\,du\r]^pw(z)\,dz\r\}^{\frac1p}\\
&\quad\ls \lf[\int_{2Q}|f(z)|^pw(z)\,dz\r]^{\frac1p}+
  \sum_{k=1}^\fz \lf[\frac{w(Q)}{|2^kQ|^p}\lf\{\int_{2^{k+1}Q}|f(u)|[w(u)]^{\frac1p}
  [w(u)]^{-\frac1p}\,du\r\}^p\r]^{\frac1p}\\
&\quad\ls \|f\|_{L_w^{p,\,\kappa} (\cc)}[w(Q)]^{\frac\kappa p}+\sum_{k=1}^\fz \lf\{w(Q)\lf[w\lf(2^kQ\r)\r]^{\kappa-1}\|f\|^p_{L_w^{p,\,\kappa} (\cc)}\r\}^{\frac1p}\\
&\quad\ls \|f\|_{L_w^{p,\,\kappa} (\cc)}w(Q)^{\frac\kappa p}+\sum_{k=1}^\fz \lf\{[w(Q)]^\kappa2^{2k\sigma(\kappa-1)}\|f\|^p_{L_w^{p,\,\kappa} (\cc)}\r\}^{\frac1p}
\ls \|f\|_{L_w^{p,\,\kappa} (\cc)}[w(Q)]^{\frac\kappa p},
\end{align*}
where, in the penultimate inequality, we used the fact that, since $w\in A_p(\cc)$,
it follows that there exist positive constants $C_{(w)}$ and $\sigma\in(0, 1)$ such that,
for any square $Q\subset\cc$ and measurable set $E\subset Q$,
\begin{equation}\label{e-sigma defn}
\frac{w(E)}{w(Q)}\leq C_{(w)} \lf(\frac{|E|}{|Q|}\r)^\sigma.
\end{equation}
This finishes the proof of Lemma \ref{l-sub opr bdd}.
\end{proof}

\begin{proof}[Proof of Theorem \ref{t-Beurling com compact}(i)]
When $b\in \cmoc$, for any $\varepsilon\in(0,\infty)$,
there exists $b^{(\varepsilon)}\in C^\fz_c(\cc)$ such that
$\|b-b^{(\varepsilon)}\|_{\bmoc}<\varepsilon.$
Then, by the boundedness of $[b, \mathcal B]$ on $L_w^{p,\,\kappa} (\cc)$
(see \cite[Theorem 3.4]{KomoriShirai09MathNachr}), we obtain
\begin{align*}
\lf\|\lf[b,\mathcal B\r]f-\lf[b^{(\varepsilon)},\mathcal B\r]f\r\|_{L_w^{p,\,\kappa} (\cc)}
=\lf\|\lf[b-b^{(\varepsilon)},\mathcal B\r]f\r\|_{L_w^{p,\,\kappa} (\cc)}
\lesssim\lf\|b-b^{(\varepsilon)}\r\|_{\bmoc}\|f\|_{L_w^{p,\,\kappa} (\cc)}
\le\varepsilon\|f\|_{L_w^{p,\,\kappa} (\cc)}.
\end{align*}
Moreover, from Lemmas \ref{l-approx commuta} and \ref{l-sub opr bdd},
we deduce that
$$\lim_{\eta\to0}\lf\|\lf[b, \mathcal B_\eta\r]-
  \lf[b, \mathcal B\r]\r\|_{L_w^{p,\,\kappa} (\cc)\to L_w^{p,\,\kappa} (\cc)}=0.$$
Thus, it suffices to show that, for any $b\in C^\fz_c(\cc)$ and $\eta\in(0,\infty)$
small enough, $[b,\,\mathcal B_\eta]$ is a compact operator on $L_w^{p,\,\kappa} (\cc)$.
From the definition of compact operators, to show $[b,\,\mathcal B_\eta]$ is compact
on $L_w^{p,\,\kappa} (\cc)$, it suffices to show that, for any bounded subset
$\cf\subset L_w^{p,\,\kappa} (\cc)$, $[b,\,\mathcal B_\eta]\cf$ is relatively compact.
It follows from Lemma \ref{l-fre kol} that we only need to show that
$[b,\,\mathcal B_\eta]\cf$ satisfies the conditions (i) through (iii) of Lemma \ref{l-fre kol}.

We first point out that, by \cite[Theorem 3.4]{KomoriShirai09MathNachr}
and the fact that $b\in\bmoc$, we know that
$[b,\,\mathcal B_\eta]$ is bounded on $L_w^{p,\,\kappa} (\cc)$ for the given $p\in(1,\infty)$,
$\kappa\in(0,1)$ and $w\in A_p(\cc)$, which implies that
$[b,\,\mathcal B_\eta]\cf$ satisfies condition (i) of Lemma \ref{l-fre kol}.

Next, since $b\in C^\fz_c(\cc)$, we may further assume
$\|b\|_{L^{\infty}(\cc)}+\|\nabla b\|_{L^{\infty}(\cc)}=1$.
Observe that there exists a positive constant $R_0$ such that $\supp (b)\subset Q(0,R_0)$.
Let $M\in(10R_0,\infty)$. Thus, for any $u\in Q(0,R_0)$ and $z\in\cc$ with $|z|\in(M,\infty)$,
we have $|z-u|\sim |z|$.
Then, by \eqref{cz kernel condition-1} and the H\"{o}lder inequality,
we conclude that
\begin{align*}
\lf|\lf[b,\,\mathcal B_\eta\r]f(z)\r|&\le\int_\cc|b(z)-b(u)|\lf|K_{\cb,\,\eta}(z, u)\r|
  |f(u)|\,du
\ls\|b\|_{L^{\infty}(\cc)}\int_{Q(0,\,R_0)}\frac{|f(u)|}{|z-u|^2}\,du\\
&\ls\frac{1}{|z|^2}\|b\|_{L^{\infty}(\cc)}\lf[\int_{Q(0,\,R_0)}|f(u)|^pw(u)\,du\r]^
 {\frac1p}\lf\{\int_{Q(0,\,R_0)}[w(u)]^{-\frac{p'}p}\,du\r\}^{\frac1{p'}}\\
&\ls\frac{1}{|z|^2}\|f\|_{L_w^{p,\,\kappa} (\cc)}
 \lf[w(Q(0,\,R_0))\r]^{\frac{\kappa-1}p}|Q(0,\,R_0)|.
\end{align*}
Therefore, for any fixed square $U:=Q(\wz z,\,\wz r)\subset \cc$, we have
\begin{align*}
&\frac{1}{[w(U)]^\kappa}\int_{U\cap\{z\in\cc:\ |z|>M\}}
    \lf|\lf[b,\,\mathcal B_\eta\r]f(z)\r|^p w(z)\,dz\\
&\quad\ls\frac{\|f\|^p_{L_w^{p,\,\kappa} (\cc)}[w(Q(0,R_0))]^{\kappa-1}
    |Q(0,R_0)|^p}{[w(U)]^\kappa}\sum_{j=0}^\infty
    \frac{w(U\cap\{z\in\cc:\ 2^jM<|z|\leq 2^{j+1}M\})}{|2^jM|^{2p}}\\
&\quad\ls \|f\|^p_{L_w^{p,\,\kappa} (\cc)}[w(Q(0,R_0))]^{\kappa-1}|Q(0,R_0)|^p
   \sum_{j=0}^\infty\frac{[w(Q(0, 2^jM))]^{1-\kappa}}{|2^jM|^{2p}}\\
&\quad\ls\|f\|^p_{L_w^{p,\,\kappa} (\cc)}[w(Q(0,R_0))]^{\kappa-1}|Q(0,R_0)|^p
   \frac{[w(Q(0, M))]^{1-\kappa}}{M^{2p}}\sum_{j=0}^\infty\frac{2^{2jp(1-\kappa)}}{2^{2jp}}
   \ls\lf(\frac{R_0}{M}\r)^{2p}\|f\|^p_{L_w^{p,\,\kappa} (\cc)},
\end{align*}
where, in the penultimate inequality, we used the fact that, if $w\in A_p(\cc)$
for some $p\in(1,\fz)$, then, for any square $Q\subset \cc$ and $t\in(1,\infty)$,
\begin{align}\label{Ap double}
w(tQ)\ls t^{2p}w(Q).
\end{align}
Thus, we conclude that
$$\lf\|\lf([b,\mathcal B_\eta]f\r)\chi_{\{z\in\cc:\ |z|>M\}}\r\|_{L_w^{p,\,\kappa}(\cc)}
  \ls \lf(\frac{R_0}{M}\r)^{2}\|f\|_{L_w^{p,\,\kappa} (\cc)}.$$
Therefore, condition (ii) of Lemma \ref{l-fre kol} holds true for
$[b,\mathcal B_\eta]\mathcal{F}$ with $M$ large enough.

It remains to prove that $[b,T_\eta]\mathcal{F}$ also satisfies condition (iii) of
Lemma \ref{l-fre kol}.
Let $\eta$ be a fixed positive constant small enough and $\xi\in\cc$ with $|\xi|\in(0,\eta/8)$.
Then, for any $z\in\cc$, we have
\begin{align*}
&\lf[b, \mathcal B_\eta\r]f(z)-\lf[b, \mathcal B_\eta\r]f(z+\xi)\\
&\quad=[b(z)-b(z+\xi)]\int_{\cc}K_{\mathcal B,\,\eta}(z, u)f(u)\,du\\
&\quad\quad +\int_{\cc}\lf[K_{\mathcal B,\,\eta}(z, u)-
  K_{\mathcal B,\,\eta}(z+\xi, u)\r][b(z+\xi)-b(u)]f(u)\,du\\
&\quad=:\sum_{i=1}^2{\rm L}_i(z).
\end{align*}

Since $b\in C^\fz_c(\cc)$, it follows that, for any $z\in\cc$,
\begin{align*}
|{\rm L}_1(z)|=|b(z)-b(z+\xi)|\lf|\int_{\cc}K_{\mathcal B,\,\eta}(z, u)f(u)\,du\r|
 \ls |\xi| \lf\|\nabla b\r\|_{L^{\infty}(\cc)}\mathcal B_\ast(f)(z).
\end{align*}
Then Lemma \ref{l-sub opr bdd} implies
$\|{\rm L}_1\|_{L_w^{p,\,\kappa} (\cc)}\ls \|f\|_{L_w^{p,\,\kappa} (\cc)}$.

To estimate ${\rm L_2}(z)$, we first observe that
$K_{\mathcal B,\,\eta}(z,u)=0$, $K_{\mathcal B,\,\eta}(z+\xi, u)=0$
for any $z$, $u$, $\xi\in\cc$ with $|z-u|\in(0,\eta/ 4)$ and $|\xi|\in(0, \eta/8)$.
Moreover, by the definition of $K_{\mathcal B,\,\eta}(z,u)$ and \eqref{cz kernel condition-2},
we know that, for any $z$, $u$, $\xi\in\cc$ with $|z-u|\in[\eta/4,\infty)$,
$$\lf|K_{\mathcal B,\,\eta}(z,u)-K_{\mathcal B,\,\eta}(z+\xi,u)\r|\ls \frac{|\xi|}{|z-u|^{3}}.$$
This in turn implies that, for any $z\in\cc$,
\begin{align*}
|{\rm L}_2(z)|&\ls |\xi|\int_{|z-u|>\eta/4}\frac{|f(u)|}{|z-u|^{3}}\,\,du
\ls\sum_{k=0}^\fz\frac{|\xi|}{(2^k\eta)^{3}}\int_{2^k\eta/4<|z-u|\le2^{k+1}\eta/4}|f(u)|\,du\\
&\ls\sum_{k=0}^\fz\frac{|\xi|}{2^k\eta}\frac{1}{(2^k\eta)^2}\int_{Q(z,\,2^{k+1}\eta/4)}|f(u)|\,du
\ls\frac{|\xi|}\eta \cm f(z).
\end{align*}
Then, by the boundedness of $\cm$ on $L_w^{p,\,\kappa} (\cc)$, we obtain
$$\|{\rm L}_2\|_{L_w^{p,\,\kappa} (\cc)}\ls\frac{|\xi|}\eta\|f\|_{L_w^{p,\,\kappa} (\cc)}.$$
Combining the estimates of ${\rm L_i}(z)$, $i\in\{1, 2\}$, we conclude that
$[b,\,\mathcal B_\eta]\mathcal{F}$ satisfies condition (iii) of Lemma \ref{l-fre kol}.
Thus, $[b,\,\mathcal B_\eta]$ is a compact operator for any $b\in C^\fz_c(\cc)$.
This finishes the  proof of Theorem \ref{t-Beurling com compact}(i).
\end{proof}

\subsection{Proof of Theorem \ref{t-Beurling com compact}(ii)}\label{s3.2}

We begin with recalling an equivalent characterization of $\cmoc$ from
\cite[p.\,166, Lemma]{Uchiyama78TohokuMathJ}.
In what follows, the \emph{symbol} $a\to0^+$ means that $a\in(0,\fz)$ and $a\to0$.
\begin{lem}\label{l-cmo char}
Let $f\in\bmoc$.
Then $f\in\cmoc$ if and only if $f$ satisfies the following three conditions:
\begin{itemize}
  \item [{\rm(i)}] $$\lim_{a\to0^+}\sup_{|Q|=a}\co(f; Q)=0;$$
  \item [{\rm(ii)}]$$\lim_{a\to\fz}\sup_{|Q|=a}\co(f; Q)=0;$$
  \item [{\rm (iii)}]for any square $Q\subset\cc$, $$\lim_{z\to\fz}\co(f; Q+z)=0.$$
\end{itemize}
\end{lem}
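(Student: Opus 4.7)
The equivalence splits into a direct necessity and a more technical sufficiency.

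For necessity, suppose $f\in\cmoc$ and let $\varepsilon\in(0,\fz)$. Choose $\phi\in C^\fz_c(\cc)$ with $\|f-\phi\|_{\bmoc}<\varepsilon$. Since $\co(f;Q)\le \co(f-\phi;Q)+\co(\phi;Q)\le 2\|f-\phi\|_{\bmoc}+\co(\phi;Q)$ for every square $Q$, it suffices to verify that $\phi$ itself satisfies (i)--(iii) and then to let $\varepsilon\to 0^+$. Indeed, (i) follows from the Lipschitz continuity of $\phi$, yielding $\co(\phi;Q)\ls \|\nabla\phi\|_{L^\fz(\cc)}|Q|^{1/2}\to 0$ as $|Q|\to 0$; (ii) follows because $\supp\phi$ is compact and $\phi$ is bounded, giving $\co(\phi;Q)\ls \|\phi\|_{L^\fz(\cc)}|\supp\phi|/|Q|\to 0$ as $|Q|\to\fz$; and (iii) holds because, for any fixed $Q$, the translate $Q+z$ is eventually disjoint from $\supp\phi$ as $|z|\to\fz$, so $\co(\phi;Q+z)=0$ for $|z|$ sufficiently large.

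For sufficiency, assume $f\in\bmoc$ satisfies (i)--(iii). My plan is to build approximating $\phi_{R,\,\delta}\in C^\fz_c(\cc)$ by a two-step scheme: first truncate $f$ by a smooth cutoff and then mollify. Let $\eta_R\in C^\fz_c(\cc)$ be a smooth cutoff equal to $1$ on $Q(0,R)$, zero outside $Q(0,2R)$, with $|\nabla\eta_R|\ls 1/R$, and let $\rho_\delta$ be a standard nonnegative mollifier supported in $Q(0,\delta)$ with $\int\rho_\delta\,du=1$. Set $\phi_{R,\,\delta}:=(\eta_R f)\ast\rho_\delta\in C^\fz_c(\cc)$ and decompose
\[
f-\phi_{R,\,\delta}=(1-\eta_R)f+\bigl[\eta_R f-(\eta_R f)\ast\rho_\delta\bigr].
\]
The truncation error $\|(1-\eta_R)f\|_{\bmoc}$ is analyzed by splitting squares $Q$ into three regimes: $Q\subset Q(0,R/2)$ (where the term vanishes); $|Q|$ large (where (ii) applies directly to $\co(f;Q)$ on the part of $Q$ outside $Q(0,2R)$); and $|Q|$ of moderate size with $Q$ centered far from the origin (where (iii) applies). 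The mollification error $\|\eta_R f-(\eta_R f)\ast\rho_\delta\|_{\bmoc}$ reduces, on each square $Q$, to a weighted average of oscillations of $\eta_R f$ over small translates $Q-y$ with $|y|\le\delta$, which are controlled using (i) for $f$ and the Lipschitz smoothness of $\eta_R$.

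The main obstacle is achieving uniformity in the truncation step: condition (iii) gives pointwise decay in $z\to\fz$ for each \emph{fixed} square $Q$, whereas the argument needs the decay uniformly over a family of squares whose side length ranges in a compact interval $[a_1,a_2]$. I would handle this via a covering argument, partitioning $[a_1,a_2]$ into finitely many subintervals, invoking (iii) for a finite family of reference squares, and closing the gap with a small-perturbation estimate controlling $|\co(f;Q+z)-\co(f;Q'+z)|$ when $Q$ and $Q'$ are geometrically close; this estimate follows from the global $\bmoc$-bound on $f$ by standard domain-comparison. Choosing $R_n\to\fz$ and then $\delta_n\to 0^+$ along a suitable diagonal, both errors vanish in $\bmoc$, yielding $\phi_{R_n,\,\delta_n}\to f$ in $\bmoc$ and hence $f\in\cmoc$.
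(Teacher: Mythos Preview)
The paper does not prove this lemma; it is quoted from \cite[p.\,166, Lemma]{Uchiyama78TohokuMathJ} and invoked as a black box. Your necessity argument is correct.

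The sufficiency argument has a genuine gap in the truncation step. You assert that $\|(1-\eta_R)f\|_{\bmoc}$ can be made small for large $R$, but this fails already for $f\equiv 1$, which trivially satisfies (i)--(iii): here $(1-\eta_R)f=1-\eta_R$, and any square of side comparable to $R$ straddling the annulus $Q(0,2R)\setminus Q(0,R)$ has oscillation of order $1$, so $\|1-\eta_R\|_{\bmoc}$ is bounded below uniformly in $R$. The underlying issue is that pointwise multiplication by a cutoff is not well defined on $\bmoc$-equivalence classes --- the product $(1-\eta_R)f$ depends on which representative of $f$ you pick --- and your scheme never normalizes $f$ by subtracting a constant. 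Even after such a subtraction, conditions (ii) and (iii) control only the \emph{oscillation} of $f$ on a given square, which says nothing about the oscillation of the product $(1-\eta_R)f$ there when $\eta_R$ varies nontrivially across that square; for the latter one needs pointwise (or $L^1$-average) smallness of $f$ minus the constant, and this does not follow from your regime analysis. The standard remedy is to reverse the order of your two steps: mollify first, using only (i) to make $\|f-f\ast\rho_\delta\|_{\bmoc}$ small, and then exploit the global Lipschitz bound on $f_\delta:=f\ast\rho_\delta$ together with (ii) and (iii) to obtain genuine pointwise decay of $f_\delta$ minus a suitable constant, after which truncation can be carried out in $L^\infty(\cc)$.
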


Next, we establish a lemma for the upper and the lower bounds of integrals of
$[b,\,\mathcal B]f_j$ on certain squares $Q_j$ in $\cc$ for any $j\in\nn$.
By the choice of $\alpha_Q(f)$ as in Lemma \ref{product set of b},
it is easy to show that, for any $f\in L^1_{\rm loc}(\cc)$ and square $Q\subset \cc$,
\begin{equation}\label{equi osci}
\co(f;Q)\sim\frac1{|Q|}\int_Q\lf|f(z)-\az_Q(f)\r|\,dz
\end{equation}
with the equivalent positive constants independent of $f$ and $Q$.

\begin{lem}\label{l-cmo-contra}
Let $p\in(1,\infty)$, $\kappa\in(0,1)$ and $w\in A_p(\cc)$.
Suppose that $b\in{\rm BMO}(\cc)$ is a real-valued function with
$\|b\|_{{\rm BMO}(\cc)}=1$ and there exist $\dz\in(0, \fz)$ and a sequence
$\{Q_j\}_{j\in\nn}:=\{Q(z_j, r_j)\}_{j\in\nn}$ of squares in $\cc$,
with $\{z_j\}_{j\in\nn}\subset\cc$ and $\{r_j\}_{j\in\nn}\subset(0,\infty)$,
such that, for any $j\in\nn$,
\begin{equation}\label{lower bdd osci}
\co(b; Q_j)>\dz.
\end{equation}
Then there exist real-valued functions $\{f_j\}_{j\in\nn}\subset L_w^{p,\,\kappa}(\cc)$,
positive constants $K_0$ large enough, $\wz C_0$, $\wz C_1$ and $\wz C_2$
such that, for any $j\in\nn$ and integer $k\ge K_0$,
$\|f_j\|_{L_w^{p,\,\kappa}(\cc)}\le \wz C_0$,
\begin{equation}\label{lower upper lpbdd riesz comm}
\int_{Q_j^k}\lf|\lf[b, \mathcal B\r]f_j(z)\r|^pw(z)\,dz\geq\wz C_1
\frac{\dz^p}{3^{2kp}}\lf[w\lf(Q_j\r)\r]^{\kappa-1} w\lf(3^kQ_j\r),
\end{equation}
where $Q_j^k:=3^{k-1}Q_j+3^{k}r_j\vec{e}$ and $\vec e=(1, 0)$ is the unit vector of $x$-axis,
and
\begin{equation}\label{lower upper lpbdd riesz comm2}
\int_{3^{k+1}Q_j\setminus 3^k Q_j}\lf|\lf[b, \mathcal B\r]f_j(z)\r|^pw(z)\,dz\le \wz C_2\frac1{3^{2kp}}\lf[w\lf(Q_j\r)\r]^{\kappa-1}w\lf(3^{k}Q_j\r).
\end{equation}
\end{lem}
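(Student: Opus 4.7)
The plan is to construct the $f_j$ explicitly as antisymmetric indicators adapted to the median of $b$ on $Q_j$, and to extract the oscillation via Taylor expansion of the Beurling--Ahlfors kernel around $z_j$. Setting $c_j := \alpha_{Q_j}(b)$, the median properties \eqref{median value-1} and \eqref{median value-2} allow a partition of $Q_j$, up to null sets, into disjoint measurable sets $A_j^+, A_j^-$ with $|A_j^+| = |A_j^-| = |Q_j|/2$, $b \ge c_j$ on $A_j^+$ and $b \le c_j$ on $A_j^-$. I then define
\begin{equation*}
f_j := [w(Q_j)]^{-(1-\kappa)/p}(\chi_{A_j^+} - \chi_{A_j^-}).
\end{equation*}
A routine case analysis over squares $Q \subset Q_j$, $Q \supset Q_j$, and partial overlaps, using only $|f_j| \le [w(Q_j)]^{-(1-\kappa)/p}\chi_{Q_j}$, gives $\|f_j\|_{L_w^{p,\,\kappa}(\cc)} \le \wz C_0$ with $\wz C_0 = 1$. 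By construction $\int_\cc f_j\,du = 0$, and \eqref{equi osci} together with \eqref{lower bdd osci} yields
\begin{equation*}
\int_\cc (b(u)-c_j)\, f_j(u)\,du = [w(Q_j)]^{-(1-\kappa)/p}\int_{Q_j}|b(u)-c_j|\,du \gtrsim [w(Q_j)]^{-(1-\kappa)/p}\,|Q_j|\,\delta,
\end{equation*}
the ``mass of oscillation'' that the kernel $K_{\mathcal B}$ will pick out at distance $3^k r_j$ from $Q_j$.

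To prove \eqref{lower upper lpbdd riesz comm}, I would write
\begin{equation*}
[b,\mathcal B]f_j(z) = -\frac{b(z)-c_j}{\pi}\int_{Q_j}\frac{f_j(u)}{(z-u)^2}\,du + \frac{1}{\pi}\int_{Q_j}\frac{(b(u)-c_j)f_j(u)}{(z-u)^2}\,du,
\end{equation*}
and Taylor-expand $(z-u)^{-2} = (z-z_j)^{-2} + R_j(z,u)$ with $|R_j(z,u)| \lesssim |u-z_j|/|z-z_j|^3$. Since $\int_\cc f_j\,du = 0$, the leading term of the first integral vanishes, leaving only an $R_j$-remainder of size $\lesssim r_j^3/|z-z_j|^3$, while the second integral produces the main contribution $\frac{1}{\pi(z-z_j)^2}\int_\cc (b-c_j)f_j\,du$ up to a similar remainder. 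For $z \in Q_j^k$, where $|z-z_j|\sim 3^k r_j$, this yields the pointwise estimate
\begin{equation*}
|[b,\mathcal B]f_j(z)| \ge \frac{C\,\delta}{3^{2k}[w(Q_j)]^{(1-\kappa)/p}} - \frac{C'\,(1+|b(z)-c_j|)}{3^{3k}[w(Q_j)]^{(1-\kappa)/p}}.
\end{equation*}
The main obstacle is absorbing the $|b(z)-c_j|$ term in the error. I plan to introduce the exceptional set $G_j^k := \{z \in Q_j^k: |b(z)-c_j| > \rho\,\delta\,3^k\}$ for a small absolute constant $\rho$, and combine a weighted John--Nirenberg tail estimate on $3^{k+1}Q_j$ (using $\|b\|_{\bmoc}=1$ together with the standard $|\langle b\rangle_{3^{k+1}Q_j} - c_j|\lesssim k$) with the fact that $w(G_j^k)/w(3^{k+1}Q_j)$ decays like a positive power of $|G_j^k|/|3^{k+1}Q_j|$ (the reverse H\"older property of $A_p(\cc)$ weights) to conclude $w(G_j^k) \le w(Q_j^k)/2$ once $k \ge K_0$. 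Outside $G_j^k$ the pointwise lower bound $|[b,\mathcal B]f_j(z)|\gtrsim \delta[w(Q_j)]^{-(1-\kappa)/p}/3^{2k}$ holds, and the comparability $w(Q_j^k)\sim w(3^k Q_j)$---which follows directly from the $A_p(\cc)$ condition, since $|Q_j^k|\sim|3^k Q_j|$ and $Q_j^k \subset 2\cdot 3^k Q_j$---then gives \eqref{lower upper lpbdd riesz comm}.

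For \eqref{lower upper lpbdd riesz comm2}, the same decomposition gives, on the annulus $3^{k+1}Q_j\setminus 3^k Q_j$ where again $|z-z_j|\sim 3^k r_j$, the pointwise bound
\begin{equation*}
|[b,\mathcal B]f_j(z)|^p \lesssim \frac{1}{3^{2kp}[w(Q_j)]^{1-\kappa}}\lf(1 + \frac{|b(z)-c_j|^p}{3^{kp}}\r).
\end{equation*}
Integrating against $w$, the weighted John--Nirenberg bound $\int_{3^{k+1}Q_j}|b-c_j|^p\,w\,dz \lesssim k^p\, w(3^{k+1}Q_j)$, combined with the $A_p$ doubling $w(3^{k+1}Q_j)\lesssim w(3^k Q_j)$ and the fact that $k^p/3^{kp}$ is uniformly bounded in $k$, delivers \eqref{lower upper lpbdd riesz comm2}. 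The delicate common ingredient throughout is the marriage of $\bmoc$ Chebyshev-type tail estimates in $w$-measure with the quantitative $A_p(\cc)$ comparison of weighted measures of nested subsquares; once that is in place, the remaining Taylor-expansion computations are routine.
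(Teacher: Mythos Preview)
Your proposal is correct and follows the paper's overall strategy: build $f_j$ supported on $Q_j$ with mean zero and $(b-c_j)f_j\ge 0$, split $[b,\mathcal B]f_j=(b-c_j)\mathcal B f_j-\mathcal B((b-c_j)f_j)$, extract the main lower bound from the second piece, and treat the first as an error via the smoothness estimate \eqref{cz kernel condition-2} together with BMO and $A_p$ properties. Two technical choices differ. First, for the lower bound on $|\mathcal B((b-c_j)f_j)(z)|$ over $Q_j^k$ the paper avoids Taylor expansion entirely: the specific placement of $Q_j^k$ along the positive real axis forces $(x-\zeta)^2-(y-\eta)^2\gtrsim |z-\xi|^2$ for $z\in Q_j^k$ and $\xi\in Q_j$, so the real part of the integrand has a fixed sign and the pointwise lower bound follows directly, with no remainder to absorb. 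Second, for the error $(b(z)-c_j)\mathcal B f_j(z)$ the paper does not pass through an exceptional set; it simply bounds $\int_{Q_j^k}|b-c_j|^p\,w$ by pairing H\"older with the reverse H\"older inequality for $w$ and unweighted John--Nirenberg, obtaining $\lesssim k^p\,w(3^kQ_j)$, and then compares main and error terms at the level of $L^p_w$ integrals rather than pointwise. Your exceptional-set route is equally valid---the $A_\infty$ comparison $w(G_j^k)/w(3^{k+1}Q_j)\lesssim(|G_j^k|/|3^{k+1}Q_j|)^\sigma$ combined with John--Nirenberg does yield $w(G_j^k)\le w(Q_j^k)/2$ for $k\ge K_0$ uniformly in $j$---but the paper's $L^p$ comparison is slightly shorter and also feeds directly into the proof of \eqref{lower upper lpbdd riesz comm2}. (A minor point: the paper does not force $|A_j^+|=|A_j^-|$; it instead corrects $\chi_{\{b>c_j\}}-\chi_{\{b<c_j\}}$ by $a_j\chi_{Q_j}$ with $|a_j|\le 1/2$ to restore mean zero, which is equivalent in effect.)
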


\begin{proof}
For each $j\in\nn$, define the function $f_j$ as follows:
\begin{equation*}
f^{(1)}_j:=\chi_{Q_{j,\,1}}-\chi_{Q_{j,\,2}}:=\chi_{\{z\in Q_j:\  b(z)>\az_{Q_j}(b)\}}-
\chi_{\{z\in Q_j:\  b(z)<\az_{Q_j}(b)\}},\quad f^{(2)}_j:=a_j\chi_{Q_j}
\end{equation*}
and
$$f_j:=\lf[w\lf(Q_j\r)\r]^{\frac{\kappa-1}p}\lf[f^{(1)}_j-f^{(2)}_j\r],$$
 where $Q_j$ is as in the assumption of Lemma \ref{l-cmo-contra} and $a_j\in\rr$ is a
 constant such that
 \begin{equation}\label{fj proper-2}
 \int_\cc f_j(z)\,dz=0.
  \end{equation}
Then, by the definition of $a_j$, \eqref{median value-1} and \eqref{median value-2},
we conclude that $|a_j|\le 1/2$.
Moreover, we also have $\supp(f_j)\subset Q_j$ and, for any $z\in Q_j$,
\begin{equation}\label{fj proper-1}
 f_j(z)\lf[b(z)-\az_{Q_j}(b)\r]\ge0.
 \end{equation}
Moreover, since $|a_j|\le1/2$, we deduce that, for any $z\in (Q_{j,\,1}\cup Q_{j,\,2})$,
  \begin{equation}\label{fj proper-3}
   \lf|f_j(z)\r|\sim \lf[w\lf(Q_j\r)\r]^{\frac{\kappa-1}p}
  \end{equation}
and hence
$$\lf\|f_j\r\|_{L_w^{p,\,\kappa}(\cc)}\ls\sup_{P\subset\cc}
\lf\{\frac{w(P\cap Q_j)}{[w(P)]^\kappa}\r\}^{\frac1p}\lf[w\lf(Q_j\r)\r]^{\frac{\kappa-1}p}
\ls \sup_{P\subset\cc}\lf[w(P\cap Q_j)\r]^{\frac{1-\kappa}p}
\lf[w\lf(Q_j\r)\r]^{\frac{\kappa-1}p}\ls 1.$$

Observe that
\begin{equation}\label{com equiv}
\lf[b, \mathcal B\r]f=\lf[b-\az_{Q_j}(b)\r]\mathcal B(f)-
  \mathcal B\lf(\lf[b-\az_{Q_j}(b)\r]f\r).
\end{equation}
Moreover, for any $k\in\nn$, we have
\begin{equation}\label{ijk set inclu}
3^{k-1}Q_j\subset4Q_j^k\subset3^{k+1}Q_j
\end{equation}
and  hence
\begin{equation}\label{ijk meas}
w\lf(Q_j^k\r)\sim w\lf(3^{k}Q_j\r).
\end{equation}

We now prove inequality \eqref{lower upper lpbdd riesz comm}. By
\eqref{cz kernel condition-2}, \eqref{fj proper-2}, \eqref{fj proper-3}
and the fact that $|z-z_j|\sim|z-\xi|$ for any $z\in Q_j^k$ with integer $k\geq2$
and $\xi\in Q_j$, we conclude that, for any $z\in Q_j^k$,
\begin{align}\label{upper bdd riesz ope}
\lf|\lf[b(z)-\az_{Q_j}(b)\r]\mathcal B(f_j)(z)\r|
&=\lf|b(z)-\az_{Q_j}(b)\r|\lf|\int_{Q_j}\lf[K_{\mathcal B}(z- \xi)-
   K_{\mathcal B}(z- z_j)\r]f_j(\xi)\,d\xi\r|\\
&\le\lf|b(z)-\az_{Q_j}(b)\r|\int_{Q_j}\lf|K_{\mathcal B}(z- \xi)-
   K_{\mathcal B}(z-z_j)\r|\lf|f_j(\xi)\r|\,d\xi\noz\\
&\ls \lf[w\lf(Q_j\r)\r]^{\frac{\kappa-1}p}\lf|b(z)-\az_{Q_j}(b)\r|\int_{Q_j}
\frac{|\xi-z_j|}{|z-z_j|^3}\,d\xi\noz\\
&\ls \lf[w\lf(Q_j\r)\r]^{\frac{\kappa-1}p}r^3_j\frac{|b(z)-\az_{Q_j}(b)|}{|z-z_j|^3}\noz
\ls \frac{[w(Q_j)]^{\frac{\kappa-1}p}}{3^{3k}}\lf|b(z)-\az_{Q_j}(b)\r|.
\end{align}
Moreover, by the well-known John-Nirenberg inequality and $\|b\|_{{\rm BMO}(\cc)}=1$,
we know that,  for each $k\in \nn$ and square $Q\subset\cc$,
\begin{align}\label{b-bmo-bdd}
\int_{3^{k+1}Q}\lf|b(z)-\az_{Q}(b)\r|^p\,dz
&\ls \int_{3^{k+1}Q}\lf|b(z)-\az_{3^{k+1}Q}(b)\r|^p\,dz
+\lf|3^{k+1}Q\r|\lf|\az_{3^{k+1}Q}(b)-\az_{Q}(b)\r|^p\\
&\ls k^p\lf|3^kQ\r|,\noz
\end{align}
where the last inequality is deduced from the fact that
\begin{align*}
  \lf|\az_{3^{k+1}Q}(b)-\az_{Q}(b)\r|\ls
  \lf|\alpha_{3^{k+1}Q}(b)-\langle b\rangle_{3^{k+1}Q}\r|
  +\lf|\langle b\rangle_{3^{k+1}Q}-\langle b\rangle _{Q}\r|
  +\lf|\langle b\rangle_{Q}-\alpha_{Q}(b)\r|
  \ls k.
\end{align*}
Since $w\in A_p(\cc)$, it follows that there exists $\ez\in(0, \fz)$
such that the reverse H\"older inequality
$$\left[\frac1{|Q|}\int_Q w(z)^{1+\ez}\,dz\r]^{\frac1{1+\ez}}\ls \frac1{|Q|}\int_Q w(z)\,dz$$
holds true for any square $Q\subset \cc$. By this fact, the H\"older inequality,
 \eqref{b-bmo-bdd}, \eqref{ijk set inclu} and \eqref{upper bdd riesz ope}, we conclude that
there exists a positive constant $\wz C_3$ such that, for any $k\in \nn$,
\begin{align}\label{upper bdd com}
&\int_{Q_j^k}\lf|\lf[b(z)-\az_{Q_j}(b)\r]\mathcal B(f_j)(z)\r|^pw(z)\,dz\\
&\quad\ls \frac{[w(Q_j)]^{\kappa-1}}{3^{3kp}}\int_{3^{k+1}Q_j}\lf|b(z)-\az_{Q_j}(b)\r|^pw(z)\,dz\noz\\
&\quad\ls \frac{[w(Q_j)]^{\kappa-1}}{3^{3kp}}\lf|3^{k}Q_j\r|\lf[\frac1{|3^{k+1}Q_j|}
\int_{3^{k+1}Q_j}\lf|b(z)-\az_{Q_j}(b)\r|^{p(1+\ez)'}\,dz\r]^{\frac1{(1+\ez)'}}\noz\\
&\quad\quad\times\lf[\frac1{|3^{k+1}Q_j|}\int_{3^{k+1}Q_j}w(z)^{1+\ez}\,dz\r]
  ^{\frac1{1+\ez}}\noz\\
&\quad\ls \frac{k^p}{3^{3kp}}\lf[w\lf(Q_j\r)\r]^{\kappa-1}w\lf(3^{k+1}Q_j\r)
\leq \wz C_3\frac{k^p}{3^{3kp}}\lf[w\lf(Q_j\r)\r]^{\kappa-1}w\lf(3^{k}Q_j\r)\noz.
\end{align}

Next, observing that, for any $z:=x+iy\in Q_j^k$ with integer $k\ge 2$ and
$\xi:=\zeta+i \eta\in Q_j$, we have
$$x-\zeta\ge \lf(2\cdot3^{k-1}-1\r)r_j,\quad|y-\eta|\le 3^{k-1}r_j\quad \mathrm{and} \quad
x-\zeta\sim |z-\xi|\sim|z-z_j|.$$
By this, together with  \eqref{fj proper-1}, \eqref{fj proper-3},
\eqref{equi osci} and \eqref{lower bdd osci},
we conclude that, for any $z\in Q_j^k$,
\begin{align*}
\lf|\mathcal B\lf(\lf[b-\az_{Q_j}(b)\r]f_j\r)(z)\r|
&=\frac1\pi\lf|\int_{Q_{j,\,1}\cup Q_{j,\,2}}
\frac{[b(\xi)-\az_{Q_j}(b)]f_j(\xi)}{(z-\xi)^2}\,d\xi\r|\\
&=\frac1\pi\lf|\int_{Q_{j,\,1}\cup Q_{j,\,2}}
\frac{[(x-\zeta)^2-(y-\eta)^2][b(\xi)-\az_{Q_j}(b)]f_j(\xi)}{|z-\xi|^4}\,d\xi\r.\\
&\quad\quad-2i\lf.\int_{Q_{j,\,1}\cup Q_{j,\,2}}
\frac{(x-\zeta)(y-\eta)[b(\xi)-\az_{Q_j}(b)]f_j(\xi)}{|z-\xi|^4}\,d\xi\r|\\
&\gs\int_{Q_{j,\,1}\cup Q_{j,\,2}}
\frac{(x-\zeta)^2|[b(\xi)-\az_{Q_j}(b)]f_j(\xi)|}{|z-\xi|^4}\,d\xi\\
&\gs\frac{1}{|z-z_j|^2}\lf[w\lf(Q_j\r)\r]^{\frac{\kappa-1}p}
\int_{Q_j}\lf|b(\xi)-\az_{Q_j}(b)\r|\,d\xi
\gs \frac{\dz}{3^{2k}} \lf[w\lf(Q_j\r)\r]^{\frac{\kappa-1}p},
\end{align*}
where $z:=x+iy$ and $\xi:=\zeta+i\eta$ with $x,\,y,\,\zeta,\,\eta\in\rr$.
From this and \eqref{ijk meas}, we deduce that there exists a positive constant $\wz C_4$ such that
\begin{align}\label{low bdd com}
\quad\quad\int_{Q_j^k}\lf|\mathcal B\lf(\lf[b-\az_{Q_j}(b)\r]f_j\r)(z)\r|^pw(z)\,dz
&\gs \frac{\dz^p}{3^{2kp}}\lf[w\lf(Q_j\r)\r]^{\kappa-1} w\lf(Q_j^k\r)\\
&\ge \wz C_4\frac{\dz^p}{3^{2kp}}\lf[w\lf(Q_j\r)\r]^{\kappa-1} w\lf(3^kQ_j\r)\noz.
\end{align}
Take $K_0\in(0,\infty)$ large enough such that, for any integer $k\ge K_0$,
$$\wz C_4\frac{\dz^p}{2^{p-1}}-\wz C_3\frac{k^p}{3^{kp}}\ge \wz C_4\frac{\dz^p}{2^p}.$$
From this, \eqref{com equiv},  \eqref{upper bdd com} and \eqref{low bdd com},
we further deduce that
\begin{align*}
&\int_{Q_j^k}\lf|[b,\,\mathcal B]f_j(z)\r|^pw(z)\,dz\noz\\
&\quad\ge\frac1{2^{p-1}}\int_{Q_j^k}\lf|\mathcal B\lf(\lf[b-\az_{Q_j}(b)\r]f_j\r)(z)\r|^pw(z)\,dz
-\int_{Q_j^k}\lf|\lf[b(z)-\az_{Q_j}(b)\r]\mathcal B(f_j)(z)\r|^pw(z)\,dz\noz\\
&\quad\ge\lf(\wz C_4\frac{\dz^p}{2^{p-1}}-\wz C_3\frac{k^p}{3^{kp}}\r)
\frac1{3^{2kp}}\lf[w\lf(Q_j\r)\r]^{\kappa-1} w\lf(3^kQ_j\r)
\ge \frac{\wz C_4}{2^p}
\frac{\dz^p}{3^{2kp}}\lf[w\lf(Q_j\r)\r]^{\kappa-1} w\lf(3^kQ_j\r).
\end{align*}
This shows inequality \eqref{lower upper lpbdd riesz comm}.

Now, we show the inequality \eqref{lower upper lpbdd riesz comm2}.
From $\supp(f_j)\subset Q_j$, \eqref{cz kernel condition-1}, \eqref{fj proper-3},
\eqref{equi osci} and $\|b\|_{{\rm BMO}(\cc)}=1$,
we deduce that, for any $z\in 3^{k+1}Q_j\setminus 3^{k}Q_j$,
\begin{align*}
\lf|\mathcal B\lf(\lf[b-\az_{Q_j}(b)\r]f_j\r)(z)\r|\ls \lf[w\lf(Q_j\r)\r]^{\frac{\kappa-1}p}
\int_{Q_j}\frac{|b(\xi)-\az_{Q_j}(b)|}{|z-\xi|^2}\,d\xi
\ls \lf[w\lf(Q_j\r)\r]^{\frac{\kappa-1}p}\frac1{3^{2k}}.
\end{align*}
Therefore, by \eqref{upper bdd com} (which holds true with $Q^k_j$ replaced by
$3^{k+1}Q_j\setminus 3^kQ_j$), we know that, for any integer $k\ge K_0$,
\begin{align*}
&\int_{3^{k+1}Q_j\setminus 3^kQ_j}\lf|[b,\,\mathcal B]f_j(z)\r|^pw(z)\,dz\\
&\quad\ls\int_{3^{k+1}Q_j\setminus 3^kQ_j}\lf|\mathcal B
   \lf(\lf[b-\az_{Q_j}(b)\r]f_j\r)(z)\r|^pw(z)\,dz
   +\int_{3^{k+1}Q_j\setminus 3^kQ_j}\lf|\lf[b(z)-\az_{Q_j}(b)\r]
   \mathcal B(f_j)(z)\r|^pw(z)\,dz\\
&\quad\ls \lf[w\lf(Q_j\r)\r]^{\kappa-1}\frac1{3^{2kp}}w\lf(3^{k+1}Q_j\r)+\frac{k^p}{3^{3kp}}
  \lf[w\lf(Q_j\r)\r]^{\kappa-1}w\lf(3^{k}Q_j\r)
\ls \lf[w\lf(Q_j\r)\r]^{\kappa-1}\frac1{3^{2kp}}w\lf(3^{k}Q_j\r).
\end{align*}
This finishes the proof of Lemma \ref{l-cmo-contra}.
\end{proof}

Lemma \ref{l-cmo-contra} is sufficient to derive the necessity of the compactness
of Calder\'on-Zygmund commutators in unweighted case;
see, for example, \cite{TaoYangYang18arxiv}.
For weighted case, since a general weight $w\in A_p(\cc)$ is not invariant under
translations, we also need the following Lemma \ref{l-comp contra awy ori}
to deal with some tricky situations.

\begin{lem}\label{l-comp contra awy ori}
Let $p\in(1,\infty)$, $\kappa\in(0,1)$, $w\in A_p(\cc)$,
$b\in {\rm BMO}(\cc),\,\delta,\,K_0\in(0,\infty)$,
$\{f_j\}_{j\in\nn}$ and $\{Q_j\}_{j\in\nn}$ be as in Lemma \ref{l-cmo-contra}.
Assume that $\{Q_j\}_{j\in\nn}:=\{Q(z_j, r_j)\}_{j\in\nn}$ also satisfies
the following two conditions:
\begin{itemize}
  \item [{\rm(i)}] $\forall \ell,\,m\in\nn$ and $\ell\neq m$,
    \begin{equation}\label{I-j-pro}
    3C_1Q_{\ell}\bigcap 3C_1Q_m=\emptyset.
    \end{equation}
    where $C_1:=3^{K_1}> C_2:=3^{K_0}$ for some $K_1\in\nn$ large enough.
  \item [{\rm(ii)}] $\{r_j\}_{j\in\nn}$ is either non-increasing or non-decreasing
    in $j$, or there exist positive constants $C_{\mathrm{min}}$
    and $C_{\mathrm{max}}$ such that, for any $j\in\nn$,
    $$C_{\mathrm{min}}\leq r_j\leq C_{\mathrm{max}}.$$
\end{itemize}
Then there exists a positive constant $C$ such that, for any $j,\,m\in\nn$,
$$\lf\|[b,\mathcal B]f_j- [b,\mathcal B]f_{j+m}\r\|_{L_w^{p,\,\kappa}(\cc)}\geq C.$$
\end{lem}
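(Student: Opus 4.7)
The plan is to exhibit, for any fixed $j, m \in \nn$, a test square $Q^\star \subset \cc$ on which the normalized $L^p_w$-average of $[b,\mathcal B](f_j - f_{j+m})$ is bounded from below by a universal positive constant. Writing $\{\ell, \ell'\} := \{j, j+m\}$ and applying the reverse triangle inequality
\begin{align*}
\lf\|[b,\mathcal B](f_j - f_{j+m})\r\|_{L^p(Q^\star,\,w)}
\ge \lf\|[b,\mathcal B]f_\ell\r\|_{L^p(Q^\star,\,w)}
- \lf\|[b,\mathcal B]f_{\ell'}\r\|_{L^p(Q^\star,\,w)},
\end{align*}
the task splits into (a) lower-bounding the first summand after division by $[w(Q^\star)]^{\kappa/p}$ by an absolute constant $C_\ast > 0$, and (b) bounding the second summand after the same division by $C_\sharp C_1^{-\gamma}$ for some $\gamma > 0$, which can then be made arbitrarily small by choosing $K_1$ sufficiently large.

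For step (a), use hypothesis (ii) to pick $\ell \in \{j, j+m\}$ with $r_\ell \le r_{\ell'}$ (in the uniformly bounded case either choice works, up to the constant $C_{\max}/C_{\min}$): take $\ell := j+m$ in the non-increasing case, $\ell := j$ in the non-decreasing case. Set $Q^\star := Q_\ell^{K_0}$ with $K_0$ from Lemma \ref{l-cmo-contra}. Applying \eqref{lower upper lpbdd riesz comm} at $k = K_0$, dividing by $[w(Q^\star)]^\kappa \sim [w(3^{K_0} Q_\ell)]^\kappa$ via \eqref{ijk meas}, and using the monotonicity $w(3^{K_0} Q_\ell) \ge w(Q_\ell)$ together with $1 - \kappa > 0$, one obtains
$$
\frac{1}{[w(Q^\star)]^{\kappa/p}} \lf\|[b,\mathcal B]f_\ell\r\|_{L^p(Q^\star,\,w)} \ge C_\ast
$$
for some $C_\ast$ depending only on $\delta$, $p$, $\kappa$, $K_0$ and $[w]_{A_p(\cc)}$.

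For step (b), let $d := |z_\ell - z_{\ell'}|$. The disjointness \eqref{I-j-pro} with $C_1 = 3^{K_1} \gg 3^{K_0}$ guarantees $d \ge 3 C_1(r_\ell + r_{\ell'})$, so that $|z - u| \sim d \gs C_1 r_{\ell'}$ whenever $z \in Q^\star$ and $u \in \supp f_{\ell'} \subset Q_{\ell'}$. Decomposing $[b,\mathcal B]f_{\ell'}$ via \eqref{com equiv} with $Q_{\ell'}$ in place of $Q_j$, the cancellation $\int f_{\ell'}\,du = 0$ from \eqref{fj proper-2} together with the kernel smoothness \eqref{cz kernel condition-2} yields, on $Q^\star$, the pointwise bounds
\begin{align*}
\lf|\mathcal B f_{\ell'}(z)\r| &\ls \frac{r_{\ell'}^3}{d^3}[w(Q_{\ell'})]^{(\kappa-1)/p}, \\
\lf|\mathcal B([b - \alpha_{Q_{\ell'}}(b)]f_{\ell'})(z)\r| &\ls \frac{r_{\ell'}^2}{d^2}[w(Q_{\ell'})]^{(\kappa-1)/p}.
\end{align*}
Multiplying the former by $|b(z) - \alpha_{Q_{\ell'}}(b)|$, controlling this factor via John--Nirenberg and the logarithmic median-difference estimate embedded in \eqref{b-bmo-bdd}, integrating against $w$ on $Q^\star$, normalizing by $[w(Q^\star)]^\kappa$, then enclosing $Q^\star \cup Q_{\ell'}$ in an ambient square $\wz R$ of side length $\sim d$ and applying \eqref{Ap double} twice to compare $w(Q^\star)$ and $w(Q_{\ell'})$ through $w(\wz R)$, produces a bound of the form
$$
\frac{1}{[w(Q^\star)]^{\kappa/p}}\lf\|[b,\mathcal B]f_{\ell'}\r\|_{L^p(Q^\star,\,w)} \ls (r_{\ell'}/d)^{2\kappa} \ls C_1^{-2\kappa}.
$$
Fixing $K_1$ once and for all so large that $C_\sharp C_1^{-2\kappa} \le C_\ast / 2$ completes the argument with $C := C_\ast/2$.

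The main obstacle is step (b): when $Q^\star$ and $Q_{\ell'}$ are disjoint and of potentially very different sizes, direct comparison of $w(Q^\star)$ with $w(Q_{\ell'})$ is impossible without the intermediate ambient square $\wz R$, and the factor $|b(z) - \alpha_{Q_{\ell'}}(b)|$ on $Q^\star$ can accumulate substantial BMO oscillation across all scales between $Q^\star$ and $Q_{\ell'}$. Hypothesis (ii) is exactly what keeps the ratio $d/r_\ell$ (and hence the logarithmic losses from John--Nirenberg) under control uniformly in $j$ and $m$, so that the algebraic decay $C_1^{-2\kappa}$ secured by the separation \eqref{I-j-pro} is not defeated by $\log$-type factors.
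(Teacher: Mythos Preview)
Your overall strategy is sound, and in one respect cleaner than the paper's: by testing on the single square $Q^\star=Q_\ell^{K_0}$ you obtain a lower bound $C_\ast$ that is \emph{independent} of $C_1$, whereas the paper tests on the larger square $C_1Q_j$ and its lower bound \eqref{riesz-comm-lower-bdd-on-A2Ij} decays like $C_1^{-2\kappa p_0/p}$. However, your choice of which index plays the role of $\ell$ is backwards, and this creates a genuine gap in step~(b).

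You select $\ell$ so that $r_\ell\le r_{\ell'}$. With this choice the John--Nirenberg telescoping needed to pass from $\alpha_{Q_{\ell'}}(b)$ to values of $b$ on $Q^\star$ produces a factor of order $\log(d/r_\ell)+\log(d/r_{\ell'})$, and since $r_\ell$ is the \emph{smaller} radius, the dominant contribution is $\log(d/r_\ell)$. Your Term~1 then becomes, after normalization,
\[
\lesssim (r_{\ell'}/d)^{1+2\kappa}\,\log(d/r_\ell),
\]
and for this to be $\lesssim(r_{\ell'}/d)^{2\kappa}$ you would need $(r_{\ell'}/d)\log(d/r_\ell)\lesssim 1$. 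But writing $A:=d/r_{\ell'}\ge 3C_1$ and $B:=r_{\ell'}/r_\ell\ge 1$, this quantity equals $A^{-1}\log(AB)$, and in the monotone cases of hypothesis~(ii) the ratio $B$ is \emph{not} bounded (take $r_j=2^{-j}$, so $B=2^{m}$ while $A$ can stay $\sim C_1$). Thus your assertion that ``Hypothesis~(ii) is exactly what keeps the ratio $d/r_\ell$\,\ldots\,under control uniformly'' is false in the monotone cases; it only holds in the uniformly bounded case $C_{\min}\le r_j\le C_{\max}$.

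The fix is simply to reverse the choice: take $\ell$ with $r_\ell\ge r_{\ell'}$ (so $\ell:=j$ in the non-increasing case, $\ell:=j+m$ in the non-decreasing case). Your lower bound in step~(a) is unaffected. In step~(b) the dominant logarithm becomes $\log(d/r_{\ell'})$, which matches the polynomial factor $(r_{\ell'}/d)$ and is absorbed since $x^{-1}\log x$ is bounded on $[3C_1,\infty)$. This is precisely why the paper, assuming $\{r_j\}$ non-increasing, places its test square around $Q_j$ (the larger cube) and bounds $[b,\mathcal B]f_{j+m}$ (supported in the smaller cube) there; see \eqref{riesz-comm-upper-bdd-on-A2Ij-ii}--\eqref{log decrease}.
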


\begin{proof}
Without loss of generality, we may assume that $\|b\|_{{\rm BMO}(\cc)}\!=1$
and $\{r_j\}_{j\in\nn}$ is non-increasing.
Let $\{f_j\}_{j\in\nn}$, $\wz C_1$, $\wz C_2$ be as in Lemma \ref{l-cmo-contra}
associated with $\{Q_j\}_{j\in\nn}$.
Recall that, for any $w\in A_p(\cc)$ with $p\in(1,\fz)$, there exists $p_0\in (1, p)$
such that $w\in A_{p_0}(\cc)$.
By \eqref{lower upper lpbdd riesz comm}, \eqref{ijk meas}, \eqref{e-sigma defn}
and \eqref{Ap double} with $w\in A_{p_0}(\cc)$, we find that, for any $j\in\nn$,
\begin{align}\label{riesz-comm-lower-bdd-on-A2Ij}
&\lf[\int_{C_1 Q_j}\lf|[b,\mathcal B]f_j(z)\r|^pw(z)\,dz\r]
  ^{1/p}\lf[w\lf(C_1 Q_j\r)\r]^{-\kappa/p}\\
&\quad\geq \lf[w\lf(C_1 Q_j\r)\r]^{-\kappa/p}\lf\{\sum_{k=K_0}^{K_1-2}\int_{Q_{j}^k}
\lf|[b,\mathcal B]f_j(z)\r|^pw(z)\,dz\r\}^{1/p}\noz\\
&\quad\ge \lf[w\lf(C_1 Q_j\r)\r]^{-\kappa/p}\lf\{\sum_{k=K_0}^{K_1-2}\wz C_1 \delta^p
\frac{[w(Q_{j})]^{\kappa -1}w(3^{k}Q_{j})}{3^{2pk}}\r\}^{1/p}\noz\\
&\quad\gtrsim \lf[w\lf(C_1 Q_j\r)\r]^{-\kappa/p}\lf\{\sum_{k=K_0}^{K_1-2} \delta^p
\frac{[w(Q_j)]^\kappa}{3^{2(p-\sigma)k}}\r\}^{1/p}\noz\\
&\quad\ge C_3 C_1^{-\frac{2\kappa}pp_0}\lf[w\lf(Q_j\r)\r]^{-\kappa/p}\dz
 \lf[w\lf(Q_{j}\r)\r]^{\kappa/p}= C_3\dz C_1^{-\frac{2\kappa}pp_0}\noz
\end{align}
for some positive constant $C_3$ independent of $\delta$ and $C_1$.
We next prove that, for any $j,\,m\in\nn$,
\begin{equation}\label{riesz-comm-upper-bdd-on-A2Ij}
\lf[\int_{C_1 Q_j}\lf|[b,\mathcal B]f_{j+m}(z)\r|^pw(z)\,dz\r]^{1/p}
\lf[w\lf(C_1 Q_j\r)\r]^{-\kappa/p}\le \frac 12C_3\dz C_1^{-\frac{2\kappa}pp_0}.
\end{equation}
Indeed, since $\supp (f_{j+m})\subset Q_{j+m}$, from \eqref{fj proper-3}, \eqref{equi osci},
\eqref{I-j-pro} and $\|b\|_{{\rm BMO}(\cc)}=1$, it follows that, for any $z\in C_1Q_j$,
\begin{align*}
\lf|\mathcal B\lf(\lf[b-\alpha_{Q_{j+m}}(b)\r]f_{j+m}\r)(z)\r|
&\ls \lf[w\lf(Q_{j+m}\r)\r]^{\frac{\kappa-1}p}\int_{Q_{j+m}}|K_{\mathcal B}(z-\xi)|
  \lf|b(z)-\alpha_{Q_{j+m}}(b)\r|\,d\xi\\
&\ls \lf[w\lf(Q_{j+m}\r)\r]^{\frac{\kappa-1}p}\frac{r_{j+m}^2}{|z_j-z_{j+m}|^2}
\end{align*}
and hence
\begin{align}\label{riesz-comm-upper-bdd-on-A2Ij-i}
&\lf\{\int_{C_1 Q_j}\lf|\mathcal B\lf(\lf[b-\alpha_{Q_{j+m}}(b)\r]f_{j+m}\r)(z)\r|^p
w(z)\,dz\r\}^{1/p}\lf[w\lf(C_1 Q_j\r)\r]^{-\kappa/p}\\
&\quad\ls \lf[w\lf(Q_{j+m}\r)\r]^{\frac{\kappa-1}p}\frac{r_{j+m}^2}{|z_j-z_{j+m}|^2}
\lf[w\lf(C_1 Q_j\r)\r]^{\frac{1-\kappa}p}\noz\\
&\quad\ls \lf[w\lf(Q_{j+m}\r)\r]^{\frac{\kappa-1}p}\frac{r_{j+m}^2}{|z_j-z_{j+m}|^2}
\lf[w\lf(\frac{|z_j-z_{j+m}|}{r_{j+m}} Q_{j+m}\r)\r]^{\frac{1-\kappa}p}\noz\\
&\quad\ls \frac{r_{j+m}^2}{|z_j-z_{j+m}|^2}\lf(\frac{|z_j-z_{j+m}|}{r_{j+m}}\r)
  ^{2\frac{1-\kappa}p p_0}
\sim \lf(\frac{|z_j-z_{j+m}|}{r_{j+m}}\r)^{-\frac{2\kappa}{p}p_0+\frac{2p_0}{p}-2}\noz.
\end{align}

Moreover, from \eqref{cz kernel condition-2} and \eqref{fj proper-3},
we deduce that, for any $z\in C_1Q_j$,
\begin{align}\label{riz-upper-bdd-case-iii}
\lf|\mathcal B(f_{j+m})(z)\r|&\le\int_{Q_{j+m}}\lf|K_{\mathcal B}(z-\xi)-
K_{\mathcal B}(z-z_{j+m})\r|\lf|f_{j+m}(\xi)\r|\,d\xi\\
&\ls\int_{Q_{j+m}}\frac{r_{j+m}}{|z_j-z_{j+m}|^3}\lf|f_{j+m}(\xi)\r|\,d\xi
\ls \lf[w\lf(Q_{j+m}\r)\r]^{\frac{\kappa-1}p}\frac{r_{j+m}^3}{|z_j-z_{j+m}|^3}\noz.
\end{align}
Then, by \eqref{riz-upper-bdd-case-iii}, the fact that
$\{r_j\}_{j\in\nn}$ is non-increasing in $j$,
the  H\"older and the reverse H\"older inequalities, we conclude that
\begin{align}\label{riesz-comm-upper-bdd-on-A2Ij-ii}
&\lf\{\int_{C_1 Q_j}\lf|\lf[b(z)-\alpha_{Q_{j+m}}(b)\r]\mathcal B(f_{j+m})(z)\r|^pw(z)\,dz\r\}^{1/p}\lf[w\lf(C_1 Q_j\r)\r]^{-\kappa/p}\\
&\quad\ls \lf[w\lf(Q_{j+m}\r)\r]^{\frac{\kappa-1}p}\frac{r_{j+m}^3}{|z_j-z_{j+m}|^3}
  \lf[w\lf(C_1 Q_j\r)\r]^{-\kappa/p}\lf[\int_{C_1 Q_j}\lf|b(z)-\alpha_{Q_{j+m}}(b)\r|^p
  w(z)\,dz\r]^{1/p}\noz\\
&\quad\ls \lf[w\lf(Q_{j+m}\r)\r]^{\frac{\kappa-1}p}\frac{r_{j+m}^3}{|z_j-z_{j+m}|^3}
  \lf[w\lf(C_1 Q_j\r)\r]^{\frac{1-\kappa}p}\lf(\log\frac{|z_j-z_{j+m}|}{r_{j+m}}
+\log\frac{|z_j-z_{j+m}|}{r_{j}}\r)\noz\\
&\quad\ls \lf[w\lf(Q_{j+m}\r)\r]^{\frac{\kappa-1}p}\frac{r_{j+m}^3}{|z_j-z_{j+m}|^3}
  \lf[w\lf(\frac{|z_j-z_{j+m}|}{r_{j+m}} Q_{j+m}\r)\r]^{\frac{1-\kappa}p}
  \log\frac{|z_j-z_{j+m}|}{r_{j+m}}\noz\\
&\quad\ls \lf(\frac{|z_j-z_{j+m}|}{r_{j+m}}\r)^{-\frac{2\kappa}{p}p_0+\frac{2p_0}{p}-3}
   \log\frac{|z_j-z_{j+m}|}{r_{j+m}}\noz.
\end{align}
Notice that, for $C_1$ large enough, by \eqref{I-j-pro},
we know that $|z_j-z_{j+m}|$ is also large enough and hence
\begin{align}\label{log decrease}
\lf(\frac{|z_j-z_{j+m}|}{r_{j+m}}\r)^{-1}\log\frac{|z_j-z_{j+m}|}{r_{j+m}}\lesssim1.
\end{align}
Therefore, from \eqref{riesz-comm-upper-bdd-on-A2Ij-i},
\eqref{riesz-comm-upper-bdd-on-A2Ij-ii}, \eqref{log decrease} and $p_0\in(1,p)$,
we deduce that, for $C_1$ large enough,
\begin{align*}
&\lf\{\int_{C_1 Q_j}\lf|[b,\,\mathcal B](f_{j+m})(z)\r|^pw(z)\,dz\r\}^{1/p}
 \lf[w\lf(C_1 Q_j\r)\r]^{-\kappa/p}\\
&\quad\le\lf\{\int_{C_1 Q_j}\lf|\mathcal B\lf(\lf[b-\alpha_{Q_{j+m}}(b)\r]f_{j+m}\r)(z)\r|^p
 w(z)\,dz\r\}^{1/p}\lf[w\lf(C_1 Q_j\r)\r]^{-\kappa/p}\\
&\quad\quad+\lf\{\int_{C_1 Q_j}\lf|\lf[b(z)-\alpha_{Q_{j+m}}(b)\r]\mathcal B(f_{j+m})(z)\r|^p
 w(z)\,dz\r\}^{1/p}\lf[w\lf(C_1 Q_j\r)\r]^{-\kappa/p}\\
&\quad\lesssim\lf(\frac{|z_j-z_{j+m}|}{r_{j+m}}\r)^{-\frac{2\kappa}{p}p_0+\frac{2p_0}{p}-2}
\lf[1+\lf(\frac{|z_j-z_{j+m}|}{r_{j+m}}\r)^{-1}\log\frac{|z_j-z_{j+m}|}{r_{j+m}}\r]\\
&\quad\lesssim\lf(\frac{|z_j-z_{j+m}|}{r_{j+m}}\r)^{-\frac{2\kappa}{p}p_0+\frac{2p_0}{p}-2}
       \lesssim\lf[\frac{3C_1(r_j+r_{j+m})}{r_{j+m}}\r]^{-\frac{2\kappa}{p}p_0+\frac{2p_0}{p}-2}
       \lesssim C_1^{-\frac{2\kappa}{p}p_0+\frac{2p_0}{p}-2}
       \le \frac12 C_3 \delta C_1^{-\frac{2\kappa}{p}p_0}.
\end{align*}
This finishes the proof of \eqref{riesz-comm-upper-bdd-on-A2Ij}. By \eqref{riesz-comm-lower-bdd-on-A2Ij} and \eqref{riesz-comm-upper-bdd-on-A2Ij},
we know that, for any $j,\,m\in\nn$ and $C_1$ large enough,
\begin{align*}
&\lf\{\int_{C_1 Q_j}\lf|[b,\,\mathcal B](f_j)(z)-[b,\,\mathcal B](f_{j+m})(z)\r|^p
  w(z)\,dz\r\}^{1/p}\lf[w\lf(C_1 Q_j\r)\r]^{-\kappa/p}\\
&\quad\ge\lf\{\int_{C_1 Q_j}\lf|[b,\,\mathcal B](f_j)(z)\r|^pw(z)\,dz\r\}^{1/p}
  \lf[w\lf(C_1 Q_j\r)\r]^{-\kappa/p}\\
&\quad\quad-\lf\{\int_{C_1 Q_j}\lf|[b,\,\mathcal B](f_{j+m})(z)\r|^p
  w(z)\,dz\r\}^{1/p}\lf[w\lf(C_1 Q_j\r)\r]^{-\kappa/p}
  \ge\frac{1}{2}C_3\dz C_1^{-\frac{2\kappa}pp_0}.
\end{align*}
This finishes the proof of Lemma \ref{l-comp contra awy ori}.
\end{proof}

\begin{proof}[Proof of Theorem \ref{t-Beurling com compact}(ii)]
Without loss of generality, we may assume that $\|b\|_{{\rm BMO}(\cc)}=1$.
To show $b\in{\rm CMO}(\cc)$, noticing that $b\in {\rm BMO}(\cc)$ is a real-valued function,
we can use a contradiction argument via Lemmas \ref{l-cmo char}, \ref{l-cmo-contra}
and \ref{l-comp contra awy ori}.
Now observe that, if $b\notin {\rm CMO}(\cc)$, then $b$ does not satisfy at least one of
(i) through (iii) of Lemma \ref{l-cmo char}.
We show that $[b, \mathcal B]$ is not compact on $L_w^{p,\,\kappa}(\cc)$ in any of
the following three cases.

{\bf Case i)} $b$ does not satisfy Lemma \ref{l-cmo char}(i).
Then there exist $\dz\in(0, \fz)$ and a sequence
$$\{Q^{(1)}_j\}_{j\in\nn}:=\{Q(z_j^{(1)},r_j^{(1)})\}_{j\in\nn}$$ of squares in $\cc$
satisfying \eqref{lower bdd osci} and that $|Q^{(1)}_j|\to0$ as $j\to\fz$.
We further consider the following two subcases.

{\bf Subcase (i)} There exists a positive constant $M$ such that $|z^{(1)}_{j}|\in[0, M)$
for all $z^{(1)}_{j}$, $j\in\nn$. That is, $z^{(1)}_{j}\in Q_0:=Q(0, M)$, $\forall j\in\nn$.
Let  $\{f_j\}_{j\in\nn}$ be associated with $\{Q_j\}_{j\in\nn}$,
$\wz C_1$, $\wz C_2$, $K_0$ and $C_2$ be as in Lemmas \ref{l-cmo-contra} and
\ref{l-comp contra awy ori}.
Let $p_0\in(1,p)$ be such that $w\in A_{p_0}(\mathbb C)$ and $C_4:=3^{K_2}> C_2=3^{K_0}$
for $K_2\in\nn$ large enough such that
\begin{align}\label{C5}
C_5:=\wz C_1 C_{(w)}\delta^p 3^{2K_0(\sigma-p)}>
2\frac{\wz C_2}{1-3^{2(p_0-p)}}\frac{C_{(p_0)}}{3^{2K_2(p-p_0)}},
\end{align}
where $C_{(w)}$ is as in \eqref{e-sigma defn} and $C_{(p_0)}$ satisfies that,
for any square $Q\subset \cc$ and $t\in(1,\infty)$,
\begin{align}\label{Ap_0 double}
w(tQ)\leq C_{(p_0)} t^{2p_0}w(Q).
\end{align}
Since $|Q^{(1)}_j|\to 0$ as $j\to\fz$ and $\{z^{(1)}_j\}_{j\in\nn}\subset Q_0$,
we may choose a subsequence $\{Q_{j_\ell}^{(1)}\}_{\ell\in\nn}$ of
$\{Q^{(1)}_j\}_{j\in\nn}$ such that, for any $j\in\nn$,
\begin{equation}\label{descreasing interval}
\frac{|Q_{j_{\ell+1}}^{(1)}|}{|Q_{j_{\ell}}^{(1)}|}<\frac1{C_4^2}\quad{\rm and\quad} w\lf(Q_{j_{\ell+1}}^{(1)}\r)\le w\lf(Q_{j_{\ell}}^{(1)}\r).
\end{equation}
For fixed $\ell,\ m\in \mathbb N$, let
$$\mathcal J:=C_4Q^{(1)}_{j_\ell}\setminus C_2Q^{(1)}_{j_\ell},
\quad\mathcal J_1:=\mathcal J\setminus C_4Q^{(1)}_{j_{\ell+m}}
\quad\textrm{and}\quad\mathcal J_2:=\cc\setminus C_4Q^{(1)}_{j_{\ell+m}}.$$
Notice that
$$\mathcal J_1\subset \lf[\lf(C_4Q^{(1)}_{j_\ell}\r)\cap\mathcal J_2\r]
\quad{\rm and}\quad \mathcal J_1=\mathcal J\cap \mathcal J_2.$$
 We then have
\begin{align}\label{low lpbdd comparing com}
&\lf\{\int_{C_4Q_{j_\ell}^{(1)}}\lf|\lf[b,\mathcal B\r](f_{j_\ell})(z)
-\lf[b,\mathcal B\r](f_{j_{\ell+m}})(z)\r|^pw(z)\,dz\r\}^{1/p}\\
&\quad\ge\lf\{\int_{\mathcal J_1}\lf|\lf[b,\mathcal B\r](f_{j_\ell})(z)
-\lf[b,\mathcal B\r](f_{j_{\ell+m}})(z)\r|^pw(z)\,dz\r\}^{1/p}\noz\\
&\quad\ge\lf\{\int_{\mathcal J_1}\lf|\lf[b,\mathcal B\r](f_{j_\ell})(z)\r|^pw(z)\,dz\r\}^{1/p}
-\lf\{\int_{\mathcal J_2}\lf|\lf[b,\mathcal B\r](f_{j_{\ell+m}})(z)\r|^pw(z)\,dz\r\}^{1/p}\noz\\
&\quad=\lf\{\int_{\mathcal J\cap \mathcal J_2}
\lf|\lf[b,\mathcal B\r](f_{j_\ell})(z)\r|^pw(z)\,dz\r\}^{1/p}
-\lf\{\int_{\mathcal J_2}\lf|\lf[b,\mathcal B\r](f_{j_{\ell+m}})(z)\r|^pw(z)\,dz\r\}^{1/p}\noz\\
&\quad=:{\rm F_1}-{\rm F_2}.\noz
\end{align}

We first consider the term ${\rm F_1}$. Assume that
$E_{j_\ell}:=\mathcal J\setminus\mathcal J_2\not=\emptyset$.
Then $E_{j_\ell}\subset C_4Q^{(1)}_{j_{\ell+m}}$.
Thus, by \eqref{descreasing interval}, we have
\begin{align}\label{ee1}
|E_{j_\ell}|\le C_4^2\lf|Q^{(1)}_{j_{\ell+m}}\r|<\lf|Q^{(1)}_{j_{\ell}}\r|.
\end{align}
Now let
$$Q^{(1)}_{j_\ell,\,k}:=3^{k-1}Q^{(1)}_{j_\ell}+3^{k}r^{(1)}_{j_\ell}\vec{e},$$
where $2r^{(1)}_{j_\ell}$ is the side-length of $Q^{(1)}_{j_\ell}$.
Then, from \eqref{ee1}, we deduce that
$$\lf|Q^{(1)}_{j_\ell,\,k}\r|=3^{2(k-1)}\left|Q^{(1)}_{j_\ell}\r|> |E_{j_\ell}|. $$
By this, we further know that there exist at most two of
$\{Q^{(1)}_{j_\ell,\,k}\}_{k=K_0}^{K_2-2}$ intersecting $E_{j_\ell}$.
By \eqref{lower upper lpbdd riesz comm} and \eqref{e-sigma defn}, % and $C_1> C_2$ ,
we conclude that
\begin{align}\label{F_1^p}
{\rm F}_1^p&\ge\sum_{k=K_0,\,Q^{(1)}_{j_\ell,\,k}\cap E_{j_\ell}=\emptyset}^{K_2-2}
\int_{Q^{(1)}_{j_\ell,\,k}}\lf|[b,\mathcal B](f_{j_\ell})(z)\r|^pw(z)\,dz\\
&\ge\wz C_1\dz^p\sum_{k=K_0,\,Q^{(1)}_{j_\ell,\,k}\cap E_{j_\ell}=\emptyset}^{K_2-2}
\frac{[w(Q^{(1)}_{j_\ell})]^{\kappa -1}w(3^{k}Q^{(1)}_{j_\ell})}{3^{2kp}}\noz\\
&\ge\wz C_1 C_{(w)}\dz^p\sum_{k=K_0,\,Q^{(1)}_{j_\ell,\,k}\cap E_{j_\ell}=\emptyset}^{K_2-2}\frac{[w(Q^{(1)}_{j_\ell})]^{\kappa}}{3^{2k(p-\sigma)}}\noz\\
&\ge \wz C_1 C_{(w)}\dz^p3^{2K_0(\sigma-p)}\lf[w\lf(Q^{(1)}_{j_\ell}\r)\r]^{\kappa}
=C_5\lf[w\lf(Q^{(1)}_{j_\ell}\r)\r]^{\kappa}.\noz
\end{align}
If  $E_{j_\ell}:=\mathcal J\setminus\mathcal J_2=\emptyset$,
the inequality above still holds true.

Moreover, from \eqref{lower upper lpbdd riesz comm2}, \eqref{Ap_0 double},
\eqref{C5} and \eqref{descreasing interval}, we deduce that
\begin{align}\label{F_2^p}
\quad{\rm F}^p_2&\le\sum_{k=K_2}^{\fz}
\int_{3^{k+1}Q_{j_{\ell+m}}^{(1)} \setminus 3^{k}Q_{j_{\ell+m}}^{(1)}}
\lf|[b,\mathcal B](f_{j_{\ell+m}})(z)\r|^pw(z)\,dz\\
&\le\wz C_2\sum_{k=K_2}^\fz
\frac{[w(Q^{(1)}_{j_{\ell+m}})]^{\kappa -1}w(3^{k}Q^{(1)}_{j_{\ell+m}})}{3^{2kp}}
\le\wz C_2\sum_{k=K_2}^\fz\frac{C_{(p_0)}}{3^{2k(p-p_0)}}
  \lf[w\lf(Q^{(1)}_{j_{\ell+m}}\r)\r]^{\kappa}\noz\\
&\le \frac{\wz C_2}{1-3^{2(p_0-p)}}\frac{C_{(p_0)}}{3^{2K_2(p-p_0)}}
  \lf[w\lf(Q^{(1)}_{j_{\ell+k}}\r)\r]^{\kappa}
<\frac {C_5}2\lf[w\lf(Q^{(1)}_{j_{\ell+m}}\r)\r]^{\kappa}
 \le\frac{C_5}2\lf[w\lf(Q^{(1)}_{j_{\ell}}\r)\r]^{\kappa}.\noz
\end{align}
By \eqref{low lpbdd comparing com}, \eqref{F_1^p} and \eqref{F_2^p}, we obtain
\begin{align*}
&\lf\{\int_{C_4Q_{j_\ell}^{(1)}}\lf|[b,\mathcal B](f_{j_\ell})(z)-
  [b,\mathcal B](f_{j_{\ell+m}})(z)\r|^pw(z)\,dz\r\}^{1/p}\noz\\
&\quad\ge C_5^{1/p}\lf[w\lf(Q^{(1)}_{j_\ell}\r)\r]^{\kappa/p}-
  \lf(\frac{C_5}{2}\r)^{1/p}\lf[w\lf(Q^{(1)}_{j_\ell}\r)\r]^{\kappa/p}
  \gs \lf[w\lf(Q^{(1)}_{j_\ell}\r)\r]^{\kappa/p}.
\end{align*}
Thus, $\{[b,\,\mathcal B]f_j\}_{j\in\nn}$ is not relatively compact in $L_w^{p,\,\kappa}(\cc)$,
which implies that
$[b,\,\mathcal B]$ is not compact on $L_w^{p,\,\kappa}(\cc)$. Therefore,
$b$ satisfies condition (i) of Lemma \ref{l-cmo char}.

{\bf Subcase (ii)} There exists a subsequence
$\{Q^{(1)}_{j_\ell}\}_{\ell\in\nn}:=\{Q(z^{(1)}_{j_\ell}, r^{(1)}_{j_\ell})\}_{\ell\in\nn}$
of $\{Q^{(1)}_j\}_{j\in\nn}$ such that $|z^{(1)}_{j_\ell}|\to\infty$ as $\ell\to\infty$.
In this subcase, by $|Q_{j_\ell}^{(1)}|\to 0$ as $\ell\to\infty$,
we can take a mutually disjoint subsequence of $\{Q^{(1)}_{j_\ell}\}_{\ell\in\nn}$,
still denoted by $\{Q^{(1)}_{j_\ell}\}_{\ell\in\nn}$, satisfying \eqref{I-j-pro} as well.
This, via Lemma \ref{l-comp contra awy ori}, implies that $[b, \mathcal B]$ is not compact on $L_w^{p,\,\kappa}(\cc)$, which is a contradiction to our assumption.
Thus, $b$ satisfies condition (i) of Lemma \ref{l-cmo char}.

{\bf Case ii)} $b$ violates condition (ii) of Lemma \ref{l-cmo char}.
In this case, there exist $\delta\in(0, \fz)$ and a sequence
$\{Q^{(2)}_j\}_{j\in\nn}$ of squares in $\cc$ satisfying \eqref{lower bdd osci}
and that $|Q^{(2)}_j|\rightarrow\infty$ as $j\rightarrow\infty$.
We further consider the following two subcases as well.

{\bf Subcase (i)} There exists an infinite subsequence $\{Q^{(2)}_{j_\ell}\}_{\ell\in\nn}$
of $\{Q^{(2)}_j\}_{j\in\nn}$ and a point $z_0\in\cc$ such that, for any $\ell\in\nn$,
$z_0\in 3C_1Q^{(2)}_{j_\ell}$. Since  $|Q^{(2)}_{j_\ell}|\rightarrow\infty$ as $\ell\rightarrow\infty$, it follows that there exists a subsequence, still denoted by
$\{Q_{j_\ell}^{(2)}\}_{\ell\in\nn}$, such that, for any $\ell\in\nn$,
\begin{equation}\label{increasing interval}
\frac{|Q_{j_{\ell}}^{(2)}|}{|Q_{j_{\ell+1}}^{(2)}|}<\frac1{C_4^2}.
\end{equation}
Observe that 
$6C_1Q_{j_\ell}^{(2)}\subset 6C_1Q_{j_{\ell+1}}^{(2)}$ for any $j_\ell\in \nn$ and hence
\begin{align}\label{increasing interval weight}
w\lf(6C_1Q_{j_{\ell+1}}^{(2)}\r)\geq w\lf(6C_1Q_{j_{\ell}}^{(2)}\r)
\quad{\rm and\quad}\co\lf(b;6C_1Q_{j_\ell}\r)>\frac{\delta}{72C_1^2}.
\end{align}
We can use a similar method as that used in Subcase (i) of Case i) and
redefine our sets in a reversed order. That is, for any fixed $\ell,\,k\in\nn$, let
$$\widetilde{\mathcal J}:=6C_4C_1Q_{\ell+k}^{(2)}\setminus 6C_2C_1Q_{\ell+k}^{(2)},\quad
\widetilde{\mathcal J_1}:=\widetilde{\mathcal J}\setminus 6C_4C_1Q_{j_{\ell}}^{(2)}\quad
\textrm{and}\quad
\widetilde{\mathcal J_2}:=\cc\setminus 6C_4C_1Q_{j_{\ell}}^{(2)}.$$
As in Case i), by Lemma \ref{l-cmo-contra}, \eqref{increasing interval}
and \eqref{increasing interval weight},
we conclude that the commutator $[b, \mathcal B]$ is not compact on $L_w^{p,\,\kappa}(\cc)$.
This contradiction implies that $b$ satisfies condition (ii) of Lemma \ref{l-cmo char}.

{\bf Subcase (ii)} For any $z\in\cc$, the number of $\{3C_1Q^{(2)}_j\}_{j\in\nn}$
containing $z$ is finite. In this subcase, for each square
$Q^{(2)}_{j_0}\in \{Q^{(2)}_j\}_{j\in\nn}$,
the number of $\{3C_1Q^{(2)}_j\}_{j\in\nn}$ intersecting $3C_1Q^{(2)}_{j_0}$ is finite.
Then we take a mutually disjoint subsequence
$\{Q^{(2)}_{j_\ell}\}_{\ell\in\nn}$ satisfying
\eqref{lower bdd osci} and \eqref{I-j-pro}. From Lemma \ref{l-comp contra awy ori},
we deduce that $[b, \mathcal B]$ is not compact on $L_w^{p,\,\kappa}(\cc)$.
Thus, $b$ satisfies condition (ii) of Lemma \ref{l-cmo char}.

{\bf Case iii)} Condition (iii) of Lemma \ref{l-cmo char} does not hold true for $b$.
Then there exist $Q_0:=Q(z_0, r_0)\subset\cc$ and $\delta\in(0,\infty)$ such that,
for any $N\in\nn$ large enough,  there exists $z_N\in\cc$ such that
$|z_N|\in(N,\infty)$ and $M(b,\,Q+z_N)\in(\delta,\infty)$.
Moreover, there exists a subsequence
$\{Q^{(3)}_j\}_{j\in\nn}:=\{Q+z_{N_j}\}_{j\in\nn}$ of squares in $\cc$ such that
\begin{equation*}
\co\lf(b; Q^{(3)}_j\r)>\delta,\quad \forall j\in\nn
\end{equation*}
and
\begin{equation*}
3C_1Q^{(3)}_{\ell}\bigcap 3C_1Q^{(3)}_m=\emptyset,\quad
\forall \ell,\,m\in\nn\quad\mathrm{and}\quad\ell\neq m;
\end{equation*}
see, for example, \cite{Uchiyama78TohokuMathJ}.
Since, by  Case i) and ii), $\{Q^{(3)}_j\}_{j\in\nn}$ satisfies the conditions (i)
and (ii) of Lemma \ref{l-cmo char}, it follows that there exist positive constants
$C_{\mathrm{min}}$ and $C_{\mathrm{max}}$ such that
$$C_{\mathrm{min}}\leq r_j\leq C_{\mathrm{max}}, \quad \forall j\in\nn.$$
By this and Lemma \ref{l-comp contra awy ori}, we conclude that, if $[b, \mathcal B]$
is compact on $L_w^{p,\,\kappa}(\cc)$, then $b$  also  satisfies condition (iii) of
Lemma \ref{l-cmo char}. This finishes the proof of Theorem \ref{t-Beurling com compact}(ii)
and hence of Theorem \ref{t-Beurling com compact}.
\end{proof}

\section{An application to Beltrami equations}\label{s4}

In this section, we apply Theorem \ref{t-Beurling com compact} to show
 Theorem \ref{t-Bretrami equ}.
We use some ideas from \cite{Iwaniec92}; see also \cite{ClopCruz13AASFM}.
Recall that, for any suitable function $f$,
$$\mathcal C f(z):=\,\mathrm{p.\,v.\,} \frac1\pi\int_{\mathbb C}\frac{f(u)}{z-u}\,du,
\quad \forall z\in\cc$$
is the Cauchy transform satisfying
\begin{align}\label{Cauchy Trans}
\overline{\partial}\circ \mathcal C= Id\quad\mbox{and}
\quad\partial \circ \mathcal C=\mathcal B;
\end{align}
see \cite[p.\,112, Theorem 4.3.10]{AstalaIwaniecMartin09}.

\begin{proof}[Proof of Theorem \ref{t-Bretrami equ}]
We first prove that $(Id-b\mathcal B)^{-1}$ is bounded on $L_w^{p,\,\kappa} (\mathbb C)$.
To this end, since $Id-b\mathcal B$ is bounded on $L_w^{p,\,\kappa} (\mathbb C)$,
from a corollary of the open mapping theorem in \cite[p.\,77]{Yosida95}, we deduce that
it suffices to show that $Id-b\mathcal B$ is invertible on $L_w^{p,\,\kappa} (\mathbb C)$.

Let $P_0:=Id$ and, for any $N\in\nn$, let
$$P_N:=Id+b\mathcal B+(b\mathcal B)^2+\cdots+(b\mathcal B)^N.$$
Then we deduce that, for any $N\in\nn$,
\begin{align}\label{pde-oper multi}
(Id-b\mathcal B)P_{N-1}&=P_{N-1}(Id-b\mathcal B)
=Id-(b\mathcal B)^N\\
&=\lf[Id-b^N\mathcal B^N\r]+\lf[b^N \mathcal B^N-(b\mathcal B)^N\r]
=:\lf[Id-b^N\mathcal B^N\r]+K_N\noz.
\end{align}
Observe that, for each $N\in\nn$, $K_N$ consists of a finite summation of operators
that contain the commutator $[b, \mathcal B]$, $b$
and $\mathcal B$ as factors. Recall that, if $T$ is bounded and $S$ is compact
on a Banach space ${\mathcal X}$, then the operators
$TS$ and $ST$ are both compact on ${\mathcal X}$.
Thus, from Theorem \ref{t-Beurling com compact}(i), $\|b\|_{L^\fz(\mathbb C)}^N<1$
and the boundedness of $\mathcal B$ on $L_w^{p,\,\kappa} (\mathbb C)$
(by Theorem 3.3 in \cite{KomoriShirai09MathNachr}), we deduce that
$K_N$ is compact on $L_w^{p,\,\kappa} (\mathbb C)$. Moreover,
the $N$-th iterate $\mathcal B^N$ of $\mathcal B$
is another convolution Calder\'on-Zygmund operator with kernel
$$K_{\mathcal{B}^N}(z)=\frac{(-1)^N N}{\pi}\frac{\bar z^{N-1}}{z^{N+1}};$$
see \cite[p.\,73]{Stein70} or \cite[pp.\,101-102]{AstalaIwaniecMartin09}.
Arguing as in the proof of Lemma \ref{l-sub opr bdd}, we conclude that the operator norm
$\|{\mathcal B}^N\|_{L_w^{p,\,\kappa} (\mathbb C)\to L_w^{p,\,\kappa} (\mathbb C)}$
depends linearly on both the norm
$\|{\mathcal B}^N\|_{L_w^{p} (\mathbb C)\to L_w^{p} (\mathbb C)}$ and
the Calder\'on-Zygmund constant $$\|{\mathcal B}^N\|_{CZ}:=
\inf\lf\{C\in(0,\fz):\ C\ \mathrm{satisfies}\ \eqref{cz kernel condition-1}\ \mathrm{and}\
\eqref{cz kernel condition-2}\r\}.$$
Since both quantities are bounded by a harmlessly constant multiple of $N^2$
(see proofs of Theorem 1 in \cite{ClopCruz13AASFM} and Theorem 3.3 in \cite{KomoriShirai09MathNachr}, or \cite[p.\,127,\,\,Corollary 4.5.1]{AstalaIwaniecMartin09}),
we immediately deduce that
\begin{align*}
\lf\|b^N\mathcal B^N f\r\|_{L_w^{p,\,\kappa}
(\mathbb C)}\le \wz{C}N^2\|b\|_{L^\fz(\mathbb C)}^N\|f\|_{L_w^{p,\,\kappa} (\mathbb C)}
\end{align*}
for some positive constant $\wz{C}$ independent of $f,\ b$ and $N$.
This implies that, for large enough $N\in\nn$ such that
$$\wz{C}N^2\|b\|_{L^\fz(\mathbb C)}^N<1,$$
the operator $Id-b^N\mathcal B^N$ is invertible on $L_w^{p,\,\kappa} (\mathbb C)$.

We now deduce, from the invertibility
of $Id-b^N\mathcal B^N$   and  \eqref{pde-oper multi}, that
$$(Id-b\mathcal B)P_{N-1}\left(Id-b^N\mathcal B^N\right)^{-1}=Id+K_N\left(Id-b^N\mathcal B^N\right)^{-1}$$
and
$$\left(Id-b^N\mathcal B^N\right)^{-1}P_{N-1}(Id-b\mathcal B)=Id+\left(Id-b^N\mathcal B^N\right)^{-1}K_N.$$
This further implies that $Id-b\mathcal B$ is a Fredholm operator
 (see, for example, \cite[p.\,169]{Brezis}). Now, we apply the index theory to $Id-b\mathcal B$ as follows.
Since the continuous deformation $Id-tb\mathcal B$ for $t\in[0, 1]$ is a homotopy
from the identity operator $Id$ to $Id-b\mathcal B$, from the homotopical invariance of index,
we deduce that
$$\mathrm{Index}\lf(Id-b\mathcal B\r)=\mathrm{Index}(Id)=0.$$
Moreover, since any injective operator with index 0 is also onto, to obtain
the invertibility of $Id-b\mathcal B$, it remains to show that it is injective in
$L_w^{p,\,\kappa}(\cc)$. Assume that
$f\in L_w^{p,\,\kappa} (\mathbb C)$ satisfies that $f=b\mathcal B f$
on $L_w^{p,\,\kappa} (\mathbb C)$.
Then $f(z)=b(z)\mathcal B f(z)$ for $w$-almost every $z\in\cc$.
Moreover, the fact that $\supp(b)$ is compact implies that $f$ also has a compact support.
From this and $f\in L_w^{p,\,\kappa} (\mathbb C)$,
we further deduce that $f\in L_w^p(\cc)$.  Recall that
$Id-b \mathcal B$ is injective on $L_w^p(\cc)$ for any $p\in(1, \fz)$, see
\cite[p.\,101]{ClopCruz13AASFM}.
Thus $f=0$ in $L_w^{p} (\mathbb C)$ and hence $f(z)=0$ for $w$-almost every $z\in\cc$.
This shows that $Id-b\mathcal B$ is also injective and hence invertible on
$L_w^{p,\,\kappa} (\mathbb C)$.

As $(Id-b\mathcal B)^{-1}$ is bounded on $L_w^{p,\,\kappa} (\mathbb C)$,
we conclude that, for any $g\in L_w^{p,\,\kappa} (\mathbb C)$,
$$\|g\|_{L_w^{p,\,\kappa} (\mathbb C)}\ls \lf\|(Id-b\mathcal B)g\r\|_{L_w^{p,\,\kappa}
(\mathbb C)}.$$
Thus, for any $g\in L_w^{p,\,\kappa} (\mathbb C)\bigcap L^r(\cc)$,
let $f:=\mathcal C(Id-b \mathcal B)^{-1}g$.
By \eqref{Cauchy Trans}, we then have
$$\overline{\partial } f-b\partial f=g.$$
That is, $f$ satisfies \eqref{e-Bre equ}. Moreover,
\begin{align*}
\lf\||D f|\r\|_{L_w^{p,\,\kappa} (\mathbb C)}&\le \lf\|\overline{\partial} f\r\|_
{L_w^{p,\,\kappa} (\mathbb C)}+\lf\|\partial f\r\|_{L_w^{p,\,\kappa} (\mathbb C)}\\
&\ls\lf\|(Id-b\mathcal B)^{-1}g\r\|_{L_w^{p,\,\kappa} (\mathbb C)}+
\lf\|\mathcal B(Id-b\mathcal B)^{-1}g\r\|_{L_w^{p,\,\kappa} (\mathbb C)}
\ls\|g\|_{L_w^{p,\,\kappa} (\mathbb C)}.
\end{align*}

For the uniqueness, choosing two solutions $f_1$ and $f_2$ of \eqref{e-Bre equ},
the difference $f_0:=f_1-f_2$ satisfies that
$\overline{\partial}f_0-b\partial f_0=0$ and $|Df_0|\in L^r(\cc)$,
which implies that $(Id-b\mathcal B)(\overline{\partial}f_0)=0$
because $\mathcal B\circ \overline{\partial}=\partial$
(see, for example, \cite[p.\,162]{AstalaIwaniecMartin09}).
From \cite[p.\,43]{Iwaniec92} and $b\in \mathrm{CMO}(\cc)$,
we deduce that $Id-b\mathcal B$ is injective in $L^r(\cc)$.
Thus, $\overline{\partial}f_0=0$ and therefore
$\partial f_0=\mathcal B (\overline{\partial}f_0)=0$.
Accordingly, $|Df_0|=0$ and hence $f_0$ is a constant.
This finishes the proof of Theorem \ref{t-Bretrami equ}.
\end{proof}

\bigskip

\noindent Jin Tao and Dachun Yang

\medskip

\noindent Laboratory of Mathematics and Complex Systems
(Ministry of Education of China),
School of Mathematical Sciences, Beijing Normal University,
Beijing 100875, People's Republic of China

\smallskip

\noindent {\it E-mails}: \texttt{jintao@mail.bnu.edu.cn} (J. Tao)

\noindent\phantom{{\it E-mails:} }\texttt{dcyang@bnu.edu.cn} (D. Yang)

\bigskip

\noindent Dongyong Yang (Corresponding author)

\medskip

\noindent School of Mathematical Sciences, Xiamen University, Xiamen 361005,  China

\smallskip

\noindent {\it E-mail}: \texttt{dyyang@xmu.edu.cn }


\begin{thebibliography}{99}

\bibitem{AraiMizuhara97MathNachr}
H. Arai and T. Mizuhara, Morrey spaces on spaces of homogeneous type and estimates for
  {$\square_b$} and the Cauchy-Szeg\"o projection, Math. Nachr. 185 (1997), 5-20.

\vspace{-0.3cm}

\bibitem{AstalaIwaniecMartin09}
K. Astala, T. Iwaniec and G. Martin, Elliptic Partial Differential Equations and Quasiconformal
Mappings in the Plane, Princeton Mathematical Series, vol. 48, Princeton University Press,
Princeton, NJ, 2009.

\vspace{-0.3cm}

\bibitem{AstalaIwaniecSaksman01DMJ}
K. Astala, T. Iwaniec and E. Saksman, Beltrami operators in the plane, Duke Math. J. 107
(2001), 27-56.

\vspace{-0.3cm}

\bibitem{BojarskiGutlyanskiiMartioRyazanov13}
B. Bojarski, V. Gutlyanskii, O. Martio and V. Ryazanov, Infinitesimal Geometry of
Quasiconformal and Bi-Lipschitz Mappings in the Plane, EMS Tracts in Mathematics, vol. 19,
European Mathematical Society (EMS), Z\"urich, 2013.

\vspace{-0.3cm}

\bibitem{Brezis}
H. Brezis, Functional Analysis, Sobolev Spaces and Partial Differential Equations,
Universitext, Springer, New York, 2011.

\vspace{-0.3cm}


\bibitem{CDW12CJM}
Y. Chen, Y. Ding and X. Wang, Compactness of commutators for singular integrals on
Morrey spaces, Canad. J. Math. 64 (2012), 257-281.

\vspace{-0.3cm}


\bibitem{ClopCruz13AASFM}
A. Clop and V. Cruz, Weighted estimates for Beltrami equations, Ann. Acad. Sci. Fenn. Math.
38 (2013), 91-113.

\vspace{-0.3cm}

\bibitem{CoifmanLMS93JMPA}
R. R. Coifman, P. L. Lions, Y. Meyer and S. Semmes, Compensated compactness and Hardy spaces,
J. Math. Pures Appl. (9) 72 (1993), 247-286.

\vspace{-0.3cm}


\bibitem{CoifmanRochbergWeiss76AnnMath}
R. R. Coifman, R. Rochberg and G. Weiss, Factorization theorems for Hardy spaces in several
variables, Ann. of Math. (2) 103 (1976), 611-635.

\vspace{-0.3cm}

\bibitem{DiFazioRagusa91BUMIA}
G. Di Fazio and M. A. Ragusa, Commutators and Morrey spaces, Boll. Un. Mat. Ital. A (7)
5 (1991), 323-332.

\vspace{-0.3cm}


\bibitem{Duoandikoetxea01}
J. Duoandikoetxea, Fourier Analysis, Graduate Studies in Mathematics, vol. 29, American
Mathematical Society, Providence, RI, 2001.

\vspace{-0.3cm}

\bibitem{GuoHeWuYang18arxiv}
W. Guo, J. He, H. Wu and D. Yang, Characterizations of the compactness of commutators
associated with Lipschitz functions, arXiv: 1801.06064v1.

\vspace{-0.3cm}

\bibitem{GutlyanskiiRyazanovSrebroYakubov12}
V. Gutlyanskii, V. Ryazanov, U. Srebro and E. Yakubov, The Beltrami Equation. A Geometric
Approach, Developments in Mathematics, vol. 26, Springer, New York, 2012.

\vspace{-0.3cm}

\bibitem{Iwaniec92}
T. Iwaniec, {$L^p$}-theory of quasiregular mappings. In: Quasiconformal Space Mappings,
Lecture Notes in Mathematics 1508, pages 39-64, Springer, Berlin, 1992.

\vspace{-0.3cm}

\bibitem{Janson78ArkMat}
S. Janson, Mean oscillation and commutators of singular integral operators, Ark. Mat.
16 (1978), 263-270.

\vspace{-0.3cm}

\bibitem{JawerthTorchinsky85JApproxTheory}
B. Jawerth and A. Torchinsky, Local sharp maximal functions, J. Approx. Theory, 43 (1985),
231-270.

\vspace{-0.3cm}

\bibitem{John65}
F. John, Quasi-isometric mappings. In: Seminari 1962/63 Anal. Alg. Geom. e Topol. vol. 2,
Ist. Naz. Alta Mat, pages 462-473, Ediz. Cremonese, Rome, 1965.

\vspace{-0.3cm}
\bibitem{Journe83}
J. L. Journ\'e, Calder\'on-Zygmund Operators, Pseudodifferential Operators and the Cauchy
Integral of Calder\'on, Lecture Notes in Mathematics 994, Springer-Verlag, Berlin, 1983.

\vspace{-0.3cm}

\bibitem{KomoriMizuhara06MathNachr}
Y. Komori and T. Mizuhara, Factorization of functions in $H^1(\rn)$ and generalized
Morrey spaces, Math. Nachr. 279 (2006), 619-624.

\vspace{-0.3cm}

\bibitem{KomoriShirai09MathNachr}
Y. Komori and S. Shirai, Weighted Morrey spaces and a singular integral operator, Math. Nachr.
282 (2009), 219-231.

\vspace{-0.3cm}

\bibitem{KrantzLi01JMAAb}
S. G. Krantz and S. Y. Li, Boundedness and compactness of integral operators on spaces of
homogeneous type and applications. II, J. Math. Anal. Appl. 258 (2001), 642-657.

\vspace{-0.3cm}

\bibitem{LernerOmbrosiRivera17arxiv}
A. K. Lerner, S. Ombrosi and I. P. Rivera-R\'ios, Commutators of singular integrals revisited,
arxiv: 1709.04724v1.

\vspace{-0.3cm}

\bibitem{MaoSunWu16ActaMathSin}
S. Mao, L. Sun and H. Wu, Boundedness and compactness for commutators of bilinear Fourier
multipliers, Acta Math. Sinica (Chin. Ser.) 59 (2016), 317-334.

\vspace{-0.3cm}

\bibitem{MateuOrobitgVerdera09JMPA}
J. Mateu, J. Orobitg and J. Verdera, Extra cancellation of even Calder\'on-Zygmund operators
and quasiconformal mappings, J. Math. Pures Appl. (9) 91 (2009), 402-431.

\vspace{-0.3cm}

\bibitem{Stein70}
E. M. Stein, Singular Integrals and Differentiability Properties of Functions,
Princeton Mathematical Series, vol. 30, Princeton University Press, Princeton, NJ, 1970.

\vspace{-0.3cm}

\bibitem{Stromberg79IUMJ}
J. O. Str\"omberg, Bounded mean oscillation with Orlicz norms and duality of Hardy spaces,
Indiana Univ. Math. J. 28 (1979), 511-544.

\vspace{-0.3cm}

\bibitem{TaoYangYang18arxiv}
J. Tao, Da. Yang and Do. Yang, Boundedness and compactness characterizations of Cauchy integral
commutators on Morrey spaces, arXiv: 1801.04997v1.

\vspace{-0.3cm}

\bibitem{Uchiyama78TohokuMathJ}
A. Uchiyama, On the compactness of operators of Hankel type, T\^ohoku Math. J. (2) 30 (1978),
163-171.

\vspace{-0.3cm}

\bibitem{Yosida95}
K. Yosida, Functional Analysis, Classics in Mathematics, Springer-Verlag, Berlin, 1995.

\vspace{-0.3cm}

\end{thebibliography}
\end{document}